\documentclass[12 pt]{amsart}
\usepackage{amsmath,amsthm,amsfonts,amssymb,amscd,amsrefs}
\usepackage[top=3cm, bottom=3cm,left=2cm, right=2cm]{geometry}
\usepackage{centernot}
\usepackage[linkcolor=red,citecolor=red,colorlinks=true]{hyperref}
\usepackage{mathrsfs}
\usepackage{mathtools}
\usepackage{etoolbox}
\usepackage{scalerel,stackengine}
\stackMath
\newcommand\widecheck[1]{%
	\savestack{\tmpbox}{\stretchto{%
			\scaleto{%
				\scalerel*[\widthof{\ensuremath{#1}}]{\kern-.6pt\bigwedge\kern-.6pt}%
				{\rule[-\textheight/2]{1ex}{\textheight}}
			}{\textheight}%
			
		}{0.5ex}}%
	\stackon[1pt]{#1}{\scalebox{-1}{\tmpbox}}%
}
\theoremstyle{definition}
\newtheorem{defn}{Definition}[section]
\newtheorem{qn}[defn]{Question}
\newtheorem{eg}[defn]{Example}

\theoremstyle{remark}

\newtheorem{rmk}[defn]{Remark}
\theoremstyle{plain}
\newtheorem{thm}[defn]{Theorem}
\newtheorem{lem}[defn]{Lemma}
\newtheorem{cor}[defn]{Corollary}
\newtheorem{prop}[defn]{Proposition}

\DeclareMathOperator*{\spn}{span}

\DeclareMathOperator*{\conv}{conv}

\numberwithin{equation}{section}
\makeatletter 

\newcommand{\Rmnum}[1]{\expandafter\@slowromancap\romannumeral #1@}

\makeatother
\makeatletter
\def\imod#1{\allowbreak\mkern10mu({\operator@font mod}\,\,#1)}
\makeatother
\title{Differentiability of the operator norm on $\ell_p$ spaces}
\author[Sreejith Siju]{Sreejith Siju}
\address{Kerala School of Mathematics,
Kunnamangalam PO,
Kozhikode, Kerala, 673571, India}
\email{sreejithsiju5@gmail.com}
\keywords{ Operators on Banach spaces, Fr\'{e}chet and Gateaux differentiability, Strong subdifferentiability, $\ell_p$ spaces, $M$-ideal, Norm attaining operators}
\subjclass[2020]{Primary 46B20, 46B28 secondary 46B25, 47B01}

\begin{document}
\maketitle
\begin{abstract}

    In this paper, we present a characterization of strong subdifferentiability of the norm of bounded linear operators on $\ell_p$ spaces, $1\leq p<\infty$.
Furthermore, we prove that the set of all bounded linear operators in ${B}(\ell_p, \ell_q)$ for which the norm of ${B}(\ell_p, \ell_q)$ is strongly subdifferentiable is dense in ${B}(\ell_p, \ell_q)$. 
Additionally, 
   we present a characterization of Fr\'{e}chet differentiability of the norm of bounded linear operators from $\ell_p$ to $\ell_q$, where $1 < p, q < \infty$. 
Applying this result, we will show that the Fr\'{e}chet differentiability and the Gateaux differentiability of the norm of bounded linear operators on $\ell_p$ spaces coincide, extending a known theorem regarding the operator norm on Hilbert spaces. 
 
\end{abstract}

\section{Introduction}

 The differentiability of the norm is a very useful tool in understanding the geometry of the underlying Banach space. 
 Various notions of differentiability of the norm of Banach spaces include Fr\'{e}chet, Gateaux, and strong subdifferentiability (referred to as $SSD$) (see Definitions \ref{ssddef} and \ref{fred}).  It is well known that a point $x$ in a Banach space $X$ is a smooth point if and only if the norm of $X$ is Gateaux differentiable at $x$. We employ the terms Gateaux differentiability and smoothness interchangeably. 
The Fr\'{e}chet differentiability of the norm of a Banach space at point $x$ is equivalent to the existence of the unique support functional at $x$ which is strongly exposed by $x$. 
 Strong subdifferentiability provides a non-smooth extension of Fr\'{e}chet differentiability, relating Fr\'{e}chet and Gateaux differentiability. That is, the norm of a Banach space is Fr\'{e}chet differentiable at a point $x$ if and only if it is Gateaux differentiable and strongly subdifferentiable at $x$ \cite{MR1216708}. 
 The monographs \cite{MR0461094} and \cite{MR1211634} provide a detailed discussion of the concepts Fr\'{e}chet and Gateaux differentiability in Banach spaces. 
For a detailed study of strong subdifferentiability of the norm, we recommend referring to the discussions found in \cite{MR1216708, MR1364490, MR1911087}, and the references therein. 

This article primarily investigates the strong subdifferentiability of operator norm over the sequence spaces $\ell_p$, where $1<p<\infty$. Additionally, we study the Fr\'{e}chet and Gateaux differentiability within the realm of operators acting on $\ell_p$ spaces. 

The strong subdifferentiability of Banach space norms has been studied extensively in the literature. We refer the reader \cite{MR1216708} for the basic theory of strong subdifferentiability. Strong subdifferentiability has been also studied in the context of dual spaces, operator norm, and $JB^*$-triples (see \cite{MR1373537, MR4497173, MR2031171, MR1911087}). The connection between strong subdifferentiability and Bishop-Phelps Bollobas properties has been explored in \cite{MR4049872}. To see connections between strong subdifferentiability and other notions in Banach space theory, we refer \cite{MR2081949, MR1397934, MR4653431, MR0806469}. 

   An interesting characterization of the strong subdifferentiability of the dual norm of a Banach space $X$ at a bounded linear functional $f$ has been obtained in \cite{MR1911087} by G. Godefroy, V. Indumathi, and F. Lust-Piquard. The characterizing condition for strong subdifferentiability is established in terms of the set of norm-attaining points of $f$ (see Theorem \ref{indu}). 
 Moreover, in \cite{MR1373537}, M. Contreras, R. Paya, and W. Werner characterized the strong subdifferentiability of the operator norm of Hilbert spaces (see Theorem \ref{hilb}), and showed that the collection of all strongly subdifferentiable points is dense in the space of bounded linear operators on a Hilbert space. Subsequently, J. Guerrero and A. Palacios broadened this characterization of strong subdifferentiability to encompass $JB^*$-triples \cite{MR2031171}.  

 In Theorem \ref{hil}, we will show that the characterizing conditions of strong subdifferentiability obtained in \cite{MR1911087} (see Theorem \ref{indu}) and \cite{MR1373537}(see Theorem \ref{hilb}) are essentially the same if we view from the perspective of maximizing sequence (see Definition \ref{maxim}). This equivalence gives rise to two natural questions in this context.

\begin{qn}\label{qn1}
     Is it possible to extend Theorem \ref{hil} to bounded linear operators on Banach spaces? 
 \end{qn}
 \begin{qn}\label{qn2}
     Is the set of strongly subdifferentiable points of ${B}(X)$, the space of bounded linear operators on $X$, dense in ${B}(X)$?
 \end{qn}
 
In Section 2, we will show that Question \ref{qn1} has an affirmative answer in the case of $\ell_p$ spaces, $1<p<\infty$. We obtain a characterization for the strong subdifferentiability of operator norm on $\ell_p$ spaces in terms of the maximizing sequence of an operator, as observed in the case of dual norm and operator norm on Hilbert space. We also prove that such operators attain their norm. Our idea here is to first formulate a condition that is equivalent to the characterization of strong subdifferentiability of operator norm Hilbert spaces found in \cite{MR1373537} (see Theorem \ref{hil}). Then we will prove the equivalent condition characterizes the strong subdifferentiability of operator norm on $\ell_p$ spaces. We will also address the borderline cases $p=1,\infty$ towards the end of this section. 

In Section 3, we will give an affirmative answer to Question \ref{qn2} in the case of $\ell_p$ spaces, $1\leq p <\infty$.  We will exhibit that, similar to the case of operator norm on Hilbert spaces, the set of all strongly subdifferentiable points is norm dense in the space of bounded linear operators on $\ell_p$ spaces. This will also extend, in $\ell_p$ space situation, a result of J. Lindenstrauss \cite{MR0160094} regarding norm-attaining operators to the class of strongly subdifferentiable points of the space of bounded linear operator. 

In \cite{MR1073855}, F. Kittaneh and R. Younis presented a characterization of smooth points in the space of bounded linear operators on a Hilbert space, ${B}(H)$, using the essential norm of an operator. Subsequently, W. Deeb and R. Khalil extended this characterization to include operators on $\ell_p$ spaces \cite{MR1151547}. In another work \cite{MR1149980} by W. Werner and K. F. Taylor, it was exhibited that for a bounded linear operator $T$ on a Hilbert space $H$, the assumption made by F. Kittaneh and R. Younis regarding the essential norm of $T$ goes beyond characterizing smoothness and serves as a characterization for the Fr\'{e}chet differentiability of the norm at $T$. Thereby, proving that Fr\'{e}chet and Gateaux differentiability coincide in ${B}(H)$. Furthermore, it has been independently proved in \cite{MR519008} that the equivalence between Fr\'{e}chet and Gateaux differentiability holds in ${B}(H)$. So, it is natural to inquire whether the coincidence exists between the Fr\'{e}chet and Gateaux differentiability of the norm of the space of bounded linear operators on $\ell_p$ spaces.

Towards the conclusion of section 3, we establish, as a consequence of the results on strong subdifferentiability, the equivalence between Fr\'{e}chet and Gateaux differentiability of the operator norm on $\ell_p$ spaces. Further, we will show that the assumption made by F. Kittaneh and R. Younis regarding the essential norm of $T$ serves as a characterization for the Fr\'{e}chet differentiability of operator norm on $\ell_p$ spaces (see Proposition \ref{equiv}).

\textit{Notations:} In this article, we will use $X$ and $Y$ to represent infinite dimensional Banach spaces, $H$ to denote an infinite dimensional Hilbert space, and the scalar fields are either real or complex, denoted by $\mathbb{F}$. 
When referring to a subspace of a Banach space, we will always assume it to be a closed subspace. 
The closed unit ball and the unit sphere of a Banach space $X$ will be denoted by $B_X$ and $S_X$, respectively.
We consider every Banach space $X$ as a subspace of its bidual $X^{**}$ through the canonical embedding. 
For a subset $Y$ of a Banach space $X$, the annihilator of $Y$ in the dual space $X^*$ is denoted by $Y^\perp$, defined as $Y^\perp=\{f\in X^*: f(y)=0 \quad \forall y\in Y\}$. 
The convex hull of a set $S$ is the smallest convex set containing $S$, denoted by $\operatorname{conv}(S)$.
The closure of $\operatorname{conv}(S)$ is  represented by $\overline{\operatorname{conv}}(S)$. For a Banach space $X$, the extreme points of $B_X$ will be denoted by $\operatorname{ext} B_{X}$. For a bounded subset $S$ of $X$, the diameter of $S$ will be denoted by $\operatorname{diam}S$.

Let $X$ and $Y$ be Banach spaces. We denote by ${B}(X, Y)$, the space of all bounded linear operators from $X$ to $Y$ endowed with the usual operator norm. The space of all compact operators from $X$ to $Y$ is denoted by ${K}(X, Y)$. We denote $B(X\times Y)$ to be the space of all bounded bilinear functional from $X\times Y$ to $\mathbb{F}$. When $X=Y$, we abbreviate ${B}(X, Y)$ as ${B}(X)$, and ${K}(X, Y)$ as ${K}(X)$. The essential norm of a bounded linear operator $T\in {B}(X, Y)$ is defined as the distance from $T$ to the space of compact operators, denoted by $\|T\|_e$. For an operator $T\in {B}(X, Y)$, we define the set $M_T$ to be the set $M_T=\{x\in S_X: \|Tx\|=\|T\|\}$. We call the set $M_T$, the norm-attaining set of $T$.
For, $1\leq p<\infty$,  the direct sum of $X$ and $Y$ endowed with the norm $\|(x, y)\| = (\|x\|^p+\|y\|^p)^{\frac{1}{p}}$ is denoted by $X\oplus_p Y$. 
We denote by $X\hat{\otimes}_\pi Y$, and $X\check{\otimes}_\varepsilon Y$, the projective and injective tensor product of $X$ and $Y$, respectively.

\section{Strong subdifferentiability of operator norm}

In this section, we will characterize strong subdifferentiability of the norm of  ${B}(\ell_p,\ell_q)$ at an operator  $T\in {B}(\ell_p,\ell_q)$ in terms of maximizing sequence of $T$, where $1<p,q<\infty$. To begin, we will review some definitions.
We will first recall the definition of strong subdifferentiability.
\begin{defn}\label{ssddef}
    The norm of a Banach space $X$ is strongly subdifferentiable at a point $u\in X$ if
\begin{equation}\label{ssdeq}
\lim_{t \rightarrow 0^{+}} \frac{\|u+t x\|-\|u\|}{t}= \operatorname{Max}\{\operatorname{Re} f(x) : f \in J_{X^*}(u)\}
\end{equation}
uniformly for $x \in B_X$, where $J_{X^*}(u)=\left\{f \in S_{X^*} : f(u)=\|u\|\right\}$.
\end{defn}
If this happens, we say that $u$ is an $SSD$ point of $X$ or simply $u$ is an $SSD$ point when the Banach space under consideration is already known. 
 It is evident that an element $u \in X$ is an $SSD$ point of $X$ if and only if $\frac{u}{\|u\|}$ is an $SSD$ point of $X$. Hence, in the subsequent discussion, we will consider strong subdifferentiability at points on the unit sphere of a Banach space. 

We need the following two definitions in the sequel.
\begin{defn}
    Let $X$ and $Y$ be Banach spaces. A bounded linear operator $T : X \rightarrow Y$ is said to attain its norm if there exists an element $x \in S_X$ such that
$$
\|Tx\|=\|T\|. 
$$
\end{defn}
\begin{defn}\label{maxim}
A maximizing sequence for $T \in {B}(X, Y)$ is a sequence $\left\{x_n\right\}$ in $X$ with $x_n\in S_X$ for all $n$ so that $\{\left\|Tx_n\right\|\}$ converges to $\|T\|$ as $n \rightarrow \infty$.    
\end{defn}

We know, from \cite{MR1216708}, an element $u$ is an $SSD$ point of $X$  if and only if the face
$J_{X^*}(u)$
is strongly exposed by $u$, in the sense that the distance $d\left(f_n, J_{X^*}(u)\right)$ tends to zero for any sequence $\{f_n\}$ in $B_{X^*}$ satisfying $f_n(u) \rightarrow 1$. In \cite{MR1911087}, it is observed that for the dual norm of a Banach space $X$ to be strongly subdifferentiable at a bounded linear functional $f\in S_{X^*}$, it is enough to work with the set $
J_{X}(f)=\left\{x \in B_X:\|f\|=f(x)=1\right\}$ instead of $J_{X^{**}}(f)$. We will now recall this result from \cite{MR1911087}.

\begin{thm}\cite[Proposition 2.2]{MR1911087}\label{indu}
Let $X$ be a Banach space and $f \in S_{X^*}$. Then the following are equivalent. 
\begin{enumerate}
    \item[$(i)$] The norm of $X^*$ is strongly subdifferentiable at $f$. 
    \item[$(ii)$] $J_{X}(f)\neq\emptyset$, and for every sequence $\{x_n\}$ in $B_X$ satisfying $f(x_n)\rightarrow 1$, there exists a subsequence  $\{x_{n_i}\}$ of $\{x_n\}$ such that $d(x_{n_i}, J_{X}(f))\rightarrow 0.$
    
\end{enumerate} 
\end{thm}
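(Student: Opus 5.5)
We prove the equivalence of the two conditions of Theorem \ref{indu} using the characterization recalled just before it (from \cite{MR1216708}): $f$ is an $SSD$ point of $X^*$ if and only if the face $J_{X^{**}}(f)$ is strongly exposed by $f$. That is, $d(\Phi_n, J_{X^{**}}(f))\to 0$ for every sequence $\{\Phi_n\}$ in $B_{X^{**}}$ with $\operatorname{Re}\Phi_n(f)\to 1$. The idea is to transfer this condition between $B_{X^{**}}$ and $B_X$. In one direction we simply restrict to elements of $X$. In the other direction we use Goldstine's theorem and the weak$^*$-closedness of faces.

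\emph{$(i)\Rightarrow(ii)$.} Let $\{x_n\}\subset B_X$ with $f(x_n)\to 1$. By strong exposedness there are $\Phi_n\in J_{X^{**}}(f)$ with $\|x_n-\Phi_n\|\to 0$. The main task is to show that $J_X(f)$ is nonempty and that $\Phi_n$ can be replaced by points of $J_X(f)$.

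First, $\{x_n\}$ is Cauchy along a subsequence: $\|x_n-x_m\|\le \operatorname{diam}$-type estimates do not suffice directly, since $J_{X^{**}}(f)$ may be large. Instead we argue as follows. Choose a subsequence with $1-\operatorname{Re} f(x_{n_i})<2^{-i}$ and $d(x_{n_i},J_{X^{**}}(f))$ small. Then use a standard ``sliding'' construction. Given $y\in B_X$ close to $J_{X^{**}}(f)$ and $f(y)$ near $1$, the midpoints $(y+x_{n_i})/2$ again satisfy $f\to 1$. Applying strong exposedness to these midpoints yields a norm-Cauchy sequence of convex combinations $z_k\in B_X$ with $f(z_k)\to 1$ and $\|z_{k+1}-z_k\|\le \varepsilon_k$ summable. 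Its limit $z\in X$ then satisfies $f(z)=1$, so $J_X(f)\neq\emptyset$. Running the same construction starting from each $x_{n_i}$ gives points $z^{(i)}\in J_X(f)$ with $\|x_{n_i}-z^{(i)}\|\le 2d(x_{n_i},J_{X^{**}}(f))+\sum_{k\ge i}\varepsilon_k\to 0$.

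The key quantitative input is this. If $y\in B_X$ and $\operatorname{Re}f(y)>1-\delta$, then $d(y,J_{X^{**}}(f))<\eta(\delta)$ with $\eta(\delta)\to 0$ as $\delta\to 0$. This uniform modulus follows from strong exposedness by a contradiction argument. Together with Bishop–Phelps–Bollob\'as-style iteration inside $X$, it produces points of $X$ converging in norm, so we never need to leave $X$.

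\emph{$(ii)\Rightarrow(i)$.} Suppose $(i)$ fails. Then there exist $\varepsilon>0$ and $\Phi_n\in B_{X^{**}}$ with $\operatorname{Re}\Phi_n(f)\to 1$ but $d(\Phi_n,J_{X^{**}}(f))\ge\varepsilon$. First, $(ii)$ with the uniform modulus argument (by contradiction) gives $\eta(\delta)\to 0$ such that $\operatorname{Re}f(x)>1-\delta$ and $x\in B_X$ imply $d(x,J_X(f))<\eta(\delta)$. Equivalently,
\[
\{x\in B_X:\operatorname{Re}f(x)>1-\delta\}\subset J_X(f)+\eta(\delta)B_X .
\]
Now take weak$^*$ closures in $X^{**}$. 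By Goldstine's theorem the left side is weak$^*$ dense in $\{\Phi\in B_{X^{**}}:\operatorname{Re}\Phi(f)>1-\delta\}$. The right side has weak$^*$ closure contained in $\overline{J_X(f)}^{w^*}+\eta(\delta)B_{X^{**}}$, because $B_{X^{**}}$ is weak$^*$ compact. Since $\overline{J_X(f)}^{w^*}\subset J_{X^{**}}(f)$, we obtain $d(\Phi_n,J_{X^{**}}(f))\le\eta(\delta)$ for large $n$, which is a contradiction.

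The expected main obstacle is the first direction: producing genuine points of $X$ in $J_X(f)$ from approximants lying in $J_{X^{**}}(f)$. The planned tool is the summable-step iteration described above. Its delicate part is keeping each step's correction inside $X$ while controlling $f$. If this iteration proves awkward, I would instead try a contradiction argument via the uniform modulus combined with the $w^*$-density argument in reverse.
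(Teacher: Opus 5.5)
The paper quotes this result from Godefroy--Indumathi--Lust-Piquard without proof, so your argument has to stand on its own. Your $(ii)\Rightarrow(i)$ is correct. Goldstine makes $\{x\in B_X:\operatorname{Re}f(x)>1-\delta\}$ weak$^*$-dense in the corresponding slice of $B_{X^{**}}$. Also, $J_{X^{**}}(f)+\eta B_{X^{**}}$ is weak$^*$-compact, hence weak$^*$-closed, and it contains $J_X(f)+\eta B_X$.

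\textbf{The gap is in $(i)\Rightarrow(ii)$}, at the step you flag as the obstacle.

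\emph{The Cauchy claim is false.} Take $X=\ell_1$ and $f=(1,1,\dots)\in\ell_\infty$. This $f$ is an $SSD$ point, since $\|f+tg\|_\infty=1+t\sup_n\operatorname{Re}g_n+O(t^2)$ uniformly for $g\in B_{\ell_\infty}$. Yet $x_n=e_n$ has $f(x_n)=1$ and $\|e_n-e_m\|=2$, so no subsequence is Cauchy.

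\emph{The sliding scheme cannot produce a point of $J_X(f)$.} Suppose $z_{k+1}=(1-t_k)z_k+t_ky_k$ with $y_k\in B_X$. Then $1-\operatorname{Re}f(z_{k+1})\ge(1-t_k)\bigl(1-\operatorname{Re}f(z_k)\bigr)$. So $f(z_k)\to1$ (when $f(z_0)\neq1$) forces $\sum_k t_k=\infty$, and then the steps $t_k\|y_k-z_k\|$ need not be summable (they equal $2t_k$ in the example above). More fundamentally, strong exposedness applied to points of $X$ only returns approximants $\Phi\in J_{X^{**}}(f)$. Nothing in your plan turns such a $\Phi$ near $x$ into a point of $B_X$ near $x$ with a larger value of $\operatorname{Re}f$. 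Bishop--Phelps--Bollob\'as does not do this either, because it perturbs $f$ instead of raising its value.

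\textbf{The missing ingredient is a norm-localized Goldstine lemma.} Suppose $x\in B_X$, $\Phi\in B_{X^{**}}$, $\|\Phi-x\|<r$ and $\operatorname{Re}\Phi(f)>1-\delta'$. Then there is $a\in B_X$ with $\|a-x\|<r$ and $\operatorname{Re}f(a)>1-\delta'$.

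\emph{Proof of the lemma.} Fix $s$ with $\|\Phi-x\|\le s<r$, and give $X\oplus X$ the norm $\max(\|a\|,\|b\|/s)$. Apply Goldstine to $(\Phi,\Phi-x)$. This gives a net $(a_\alpha,b_\alpha)$ with $\|a_\alpha\|\le1$, $\|b_\alpha\|\le s$, $a_\alpha\to\Phi$ weak$^*$, and $a_\alpha-b_\alpha\to x$ weakly in $X$. Eventually $\operatorname{Re}f(a_\alpha)>1-\delta'$. The set $\{a-b:\|a\|\le1,\ \|b\|\le s,\ \operatorname{Re}f(a)>1-\delta'\}$ is convex, so by Mazur some $a-b$ in it lies within $r-s$ of $x$. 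Then $\|a-x\|<r$.

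\emph{The iteration.} Take $\varepsilon_k=2^{-k}\varepsilon$ and $\delta_k\downarrow0$ from the uniform modulus of strong exposedness. Given $x_k\in B_X$ with $\operatorname{Re}f(x_k)>1-\delta_k$, choose $\Phi_k\in J_{X^{**}}(f)$ with $\|x_k-\Phi_k\|<\varepsilon_k$. Since $\Phi_k(f)=1$, the lemma gives $x_{k+1}\in B_X$ with $\|x_{k+1}-x_k\|<\varepsilon_k$ and $\operatorname{Re}f(x_{k+1})>1-\delta_{k+1}$. The sequence converges to some $z$ with $f(z)=1$ and $\|x_0-z\|\le2\varepsilon$.

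This gives $J_X(f)\neq\emptyset$ and the uniform form of $(ii)$ directly, without subsequences and without any Cauchy property of $\{x_n\}$.
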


\begin{rmk}\label{rmkindu}
    It is easy to see that condition (ii) above is equivalent to the statement, $J_{X}(f)=\emptyset$, and for each $\varepsilon>0$ there exists $\delta >0$ such that whenever $x\in S_X$ satisfies $f(x)>1-\delta$, we have $d(x, J_{X}(f))<\varepsilon$. In the subsequent sections of this paper, we use these two equivalent conditions interchangeably. 
\end{rmk} 

Similarly, it is observed in \cite[Theorem 1]{MR1373537} that an analogous characterization holds for the norm of $C^*$-algebra, in particular for the operator norm on Hilbert spaces. We will now recall this result. 

\begin{thm}\cite[Theorem 1]{MR1373537}\label{hilb}
    Let $\mathcal{A}$ be a $C^*$-algebra and $a \in S_\mathcal{A}$. The following assertions are equivalent.\\
(i) The norm of $\mathcal{A}$ is strongly subdifferentiable at a.\\
(ii) 1 is an isolated point in the spectrum of $|a|$.
\end{thm}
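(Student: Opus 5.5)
This is the Contreras--Pay\'a--Werner theorem, so I outline the standard proof. Throughout, $\mathcal{A}$ is assumed to be a $C^*$-algebra, represented faithfully on a Hilbert space $H$ or embedded in its bidual von Neumann algebra $\mathcal{A}^{**}$. Since $\|a\|=\||a|\|$ and $a\mapsto$ polar decomposition is well behaved, I first reduce to statements about $|a|$. The criterion to use is the one recalled from \cite{MR1216708}: $a$ is SSD iff the face $J_{\mathcal{A}^*}(a)$ is strongly exposed by $a$. In other words, every sequence of functionals $f_n\in B_{\mathcal{A}^*}$ with $f_n(a)\to 1$ must satisfy $d(f_n,J_{\mathcal{A}^*}(a))\to 0$.

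For (ii)$\Rightarrow$(i), let $p$ be the spectral projection of $|a|$ for $\{1\}$. Because $1$ is isolated in $\sigma(|a|)$, $p=\chi_{\{1\}}(|a|)$ lies in $C^*(|a|)\subseteq\mathcal{A}$, and $\||a|(1-p)\|\le r<1$. Let $a=v|a|$ be the polar decomposition in $\mathcal{A}^{**}$ and set $u=vp$, a partial isometry. Then $ap=u$ and $\|a(1-p)\|\le r$. The norming functionals of $a$ are exactly those $f$ with $f=f(u\,\cdot\,)$ concentrated on $u$, i.e. of the form $f(x)=\varphi(u^*x)$ with $\varphi$ a state supported on $p$. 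Now take $f_n\in B_{\mathcal{A}^*}$ with $f_n(a)\to1$. Write $f_n=\psi_n(w_n^*\,\cdot\,)$ by the polar decomposition of functionals, and split
\[
f_n(a)=f_n(ap)+f_n(a(1-p)).
\]
Then a Cauchy--Schwarz estimate using $\|a(1-p)\|\le r<1$ shows that the mass of $|f_n|$ off $p$ tends to $0$. Concretely, $\|f_n-f_n(\,\cdot\,p)\|\to0$, via $|f_n(x(1-p))|^2\le |f_n|(1-p)\,|f_n|(\dots)$. The functionals $g_n=f_n(\,\cdot\,p)$ then satisfy $g_n(u)\to1$. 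A standard $2\times2$-matrix argument then shows that $g_n$ is close to $\varphi_n(u^*\,\cdot\,)$ with $\varphi_n$ a state on $p\mathcal{A}p$. Hence $d(f_n,J(a))\to0$.

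For (i)$\Rightarrow$(ii), I argue by contrapositive. Suppose $1\in\sigma(|a|)$ is not isolated; note $1\in\sigma(|a|)$ always holds since $\|a\|=1$. Pick $t_n\in\sigma(|a|)$ with $t_n\uparrow 1$ and $t_n\neq1$, together with disjointly supported continuous functions $h_n$ peaking at $t_n$. These give states $\varphi_n$ on $C^*(|a|)$, extended to $\mathcal{A}$, that are concentrated near $t_n$. Define $f_n=\varphi_n(\tilde u^*\,\cdot\,)$ with a suitable partial isometry from the polar decomposition, so that $f_n(a)\approx t_n\to1$. Since $\varphi_n$ lives on spectral subspaces of $|a|$ away from $1$, each $f_n$ has distance bounded below from every functional supported on $\chi_{\{1\}}(|a|)$, i.e. from $J(a)$. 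This contradicts strong exposedness, so $a$ is not SSD.

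The main obstacle is the technical estimate in (ii)$\Rightarrow$(i): showing that $f_n(a)\to1$ forces $f_n$ to be norm-close to a norming functional. It requires noncommutative Cauchy--Schwarz for functionals together with the polar decomposition of functionals in $\mathcal{A}^*$. I would carry it out in $\mathcal{A}^{**}$, first for positive functionals and then transfer via $u$.
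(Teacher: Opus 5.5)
The paper gives no proof of this statement. It is quoted from \cite{MR1373537} and used only through its ${B}(H)$ specialization in Theorem~\ref{hil}, so there is no internal argument to compare with. Your route is sound: strong exposedness of $J_{\mathcal{A}^*}(a)$ from \cite{MR1216708}, plus Cauchy--Schwarz for polar decompositions of functionals in $\mathcal{A}^{**}$. In ${B}(H)$, where $p$ is the projection onto $\overline{\spn}M_T$, your condition $\||a|(1-p)\|<1$ is exactly $(iii)$ of Theorem~\ref{hil}.

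However, one step fails as you describe it. In $(ii)\Rightarrow(i)$ you split $f_n(a)=f_n(ap)+f_n(a(1-p))$ and estimate the pieces by Cauchy--Schwarz. Write $f_n(x)=\psi_n(w_n^*x)$ with $\psi_n\ge0$ and $\|\psi_n\|\le1$. Then the two pieces separately give only $\operatorname{Re}f_n(a)\le\sqrt{\psi_n(p)}+r\sqrt{\psi_n(1-p)}$. The right side can reach $\sqrt{1+r^2}>1$, so $f_n(a)\to1$ does not force $\psi_n(1-p)\to0$. You must apply Cauchy--Schwarz to $f_n(a)$ as a whole, and only then use that $p$ commutes with $|a|$:
\[
|f_n(a)|^2\le\psi_n(a^*a)=\psi_n(p)+\psi_n\bigl(|a|^2(1-p)\bigr)\le 1-(1-r^2)\,\psi_n(1-p).
\]
Hence $\psi_n(1-p)\to0$. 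Then $|f_n(x(1-p))|^2\le\|x\|^2\psi_n(1-p)$ gives $\|f_n-f_n(\,\cdot\,p)\|\to0$.

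The other hand-waved steps are routine, but they should be stated.

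\emph{The $2\times2$-matrix step.} This is just one more Cauchy--Schwarz, on the other side. Put $\rho_n=\psi_n(w_n^*\,\cdot\,w_n)$. Then $|f_n(u)|^2\le\rho_n(uu^*)$, so $\rho_n(1-uu^*)\to0$ and $f_n\approx f_n(uu^*\,\cdot\,p)$. Next, $y\mapsto f_n(uy)$ on $p\mathcal{A}p$ has norm at most $1$ and value $f_n(u)\to1$ at the unit $p$, so it is norm-close to a state $\omega_n$. Finally, $x\mapsto\omega_n(u^*xp)$ lies in $J_{\mathcal{A}^*}(a)$.

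\emph{The lower bound in $(i)\Rightarrow(ii)$.} Your claim of distance bounded below from $J(a)$ needs two explicit facts.
First, every $g\in J_{\mathcal{A}^*}(a)$, written $g=\psi(w^*\,\cdot\,)$, satisfies $g(x)=\tilde g(xq)$ with $q=\chi_{\{1\}}(|a|)\in\mathcal{A}^{**}$. This holds because $1=|g(a)|^2\le\psi(|a|^2)$ forces $\psi(1-|a|^2)=0$.
Second, take $\varphi_n$ to be the character of $C^*(|a|)$ at $t_n$, and $f_n=\tilde\varphi_n(v^*\,\cdot\,)$ with $a=v|a|$. Test against $x=a\,h_n(|a|)$, where $0\le h_n\le1$, $h_n(0)=h_n(1)=0$ and $h_n(t_n)=1$. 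This gives $f_n(x)=t_n$ and $g(x)=0$, so $d(f_n,J_{\mathcal{A}^*}(a))\ge t_n$. Disjointness of the supports of the $h_n$ is not needed.

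With these repairs your outline is a complete proof.
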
 

 Assume that in the above theorem the $C^*$-algebra under consideration is ${B}(H)$ and the element $a\in S_\mathcal{A}$ corresponds to an operator $T\in S_{{B}(H)}$. Then we have the following equivalent characterization of strong subdifferentiability of operator norm on Hilbert spaces in terms of maximizing sequences/norm-attaining set of $T$. 
\begin{thm}\label{hil}
    Let $H$ be a Hilbert space and $T\in S_{{B}(H)}$. Then the following are equivalent.
    \begin{enumerate}
        \item[$(i)$] The norm of ${B}(H)$ is strongly subdifferentiable at $T$.
        \item[$(ii)$] $M_T\neq \emptyset$  and for every maximizing sequence $\{x_n\}$ of $T$, there exists a subsequence $\{x_{n_i}\}$ of $\{x_n\}$ such that $d(x_{n_i}, M_T)\rightarrow 0$.
        \item[($iii)$] $T$ attains its norm and $\|P_T|_{{M_T}^\perp}\|<1$, where $P_T=(T^*T)^{1/2}.$
    \end{enumerate}
\end{thm}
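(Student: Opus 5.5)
Using Theorem \ref{hilb} with $\mathcal{A}={B}(H)$ and $a=T$, condition (i) is equivalent to $1$ being an isolated point of $\sigma(P_T)$, where $P_T=(T^*T)^{1/2}$ is positive with $\|P_T\|=1$. It therefore suffices to prove (iii)$\Leftrightarrow$(isolated) and (ii)$\Leftrightarrow$(iii) directly, using the spectral theorem for $P_T$. The basic observation is that $\|Tx\|^2=\langle P_T^2x,x\rangle$. Hence $M_T=\{x\in S_H:\|P_Tx\|=1\}=S_H\cap\ker(I-P_T)$: if $\|P_Tx\|=1=\|x\|$, then $\langle (I-P_T^2)x,x\rangle=0$ with $I-P_T^2\ge 0$, so $P_T^2x=x$ and thus $P_Tx=x$. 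Write $E=\ker(I-P_T)$. This is a closed subspace reducing $P_T$, and $M_T^\perp=E^\perp$, which is $P_T$-invariant.

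\emph{(i)$\Leftrightarrow$(iii).} Suppose $1$ is isolated in $\sigma(P_T)$. Then the spectral projection $\chi_{\{1\}}(P_T)$ is nonzero, so $E\neq\{0\}$ and $T$ attains its norm. Moreover $\sigma(P_T|_{E^\perp})\subset\sigma(P_T)\setminus\{1\}\subset[0,r]$ for some $r<1$, and since $P_T|_{E^\perp}$ is positive, $\|P_T|_{E^\perp}\|\le r<1$. Conversely, if $E\ne\{0\}$ and $\|P_T|_{E^\perp}\|=r<1$, then $P_T=I_E\oplus P_T|_{E^\perp}$, so $\sigma(P_T)\subset\{1\}\cup[0,r]$ and $1\in\sigma(P_T)$. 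Thus $1$ is isolated.

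\emph{(iii)$\Rightarrow$(ii).} Let $\{x_n\}$ be a maximizing sequence and decompose $x_n=e_n+y_n$ with $e_n\in E$ and $y_n\in E^\perp$. Then
\[
\|Tx_n\|^2=\|P_Tx_n\|^2=\|e_n\|^2+\|P_Ty_n\|^2\le \|e_n\|^2+r^2\|y_n\|^2=1-(1-r^2)\|y_n\|^2 .
\]
Since the left side tends to $1$, we get $\|y_n\|\to0$ and $\|e_n\|\to1$. For large $n$ we have $e_n\neq 0$, and $e_n/\|e_n\|\in M_T$. Therefore
\[
d(x_n,M_T)\le \|y_n\|+\bigl|1-\|e_n\|\bigr|\to0,
\]
so in fact no subsequence is needed.

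\emph{(ii)$\Rightarrow$(iii).} $M_T\ne\emptyset$ gives norm attainment. Suppose $\|P_T|_{E^\perp}\|=1$. Since $P_T|_{E^\perp}$ is positive with norm $1$, there exist unit vectors $y_n\in E^\perp$ with $\|P_Ty_n\|\to1$, hence $\|Ty_n\|\to1$, so $\{y_n\}$ is a maximizing sequence. But for any $m\in M_T\subset E$, orthogonality gives $\|y_n-m\|^2=1+\|m\|^2=2$, so $d(y_n,M_T)=\sqrt2$ for every $n$. No subsequence has distance tending to $0$, which contradicts (ii). Hence $\|P_T|_{M_T^\perp}\|<1$.

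The steps needing the most care are the identification $M_T=S_H\cap E$ via the positivity of $I-P_T^2$, and the use of positivity of the restriction to pass between its norm and its spectrum. Both are standard spectral-theory facts, so I do not expect a serious obstacle.
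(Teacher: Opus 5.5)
Your proof is correct. For (ii)$\Leftrightarrow$(iii) it follows the paper's argument. You decompose along $E=\ker(I-P_T)$, the closed span of $M_T$. For the converse you take unit vectors in $M_T^\perp$ on which $P_T$ nearly attains norm $1$, whose distance to $M_T$ then stays bounded away from $0$.

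The real difference is in (i)$\Leftrightarrow$(iii). The paper does not prove this; it points to the \emph{proof} of Theorem~1 of Contreras, Pay\'a and Werner. You use only the \emph{statement}, Theorem~\ref{hilb}, and prove the bridge ``$1$ is isolated in $\sigma(P_T)$ iff $E\neq\{0\}$ and $\|P_T|_{E^\perp}\|<1$'' yourself by spectral theory. This makes the equivalence self-contained, at the cost of a few more lines than the paper's citation.

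Your write-up also improves two details of the paper's (iii)$\Rightarrow$(ii):
\begin{enumerate}
\item[$(a)$] You justify $M_T=S_H\cap\ker(I-P_T)$ via $I-P_T^2\ge 0$. The paper uses this tacitly when it asserts $P_Tz_n=z_n$.
\item[$(b)$] Your estimate $\|Tx_n\|^2\le 1-(1-r^2)\|y_n\|^2$ gives $\|y_n\|\to 0$ directly. This avoids the paper's manipulation of limits that need not exist a priori, and shows that the whole maximizing sequence, not just a subsequence, approaches $M_T$.
\end{enumerate}

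One step you should write out is $1\notin\sigma(P_T|_{E^\perp})$. Since $E^\perp$ reduces $P_T$, we have $\sigma(P_T|_{E^\perp})\subseteq\sigma(P_T)$. If $1$ lay in $\sigma(P_T|_{E^\perp})$, it would be isolated there and hence an eigenvalue of this self-adjoint restriction. That would give a nonzero vector in $E\cap E^\perp$, a contradiction.
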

\begin{proof}
     It is enough to show that $(ii)$ and $(iii)$ are equivalent, since $(i)\Leftrightarrow (iii)$ follows from \cite[Theorem 1, proof of $(ii)\Rightarrow (iii)$]{MR1373537}. 
     
     Assume that $(iii)$ holds and write $M=\overline{\spn}M_T$,
     recall that  
     $$M_T=\{x\in S_H: \|Tx\|=\|T\|=1\}.$$
     Let  $\{x_n\}$ in $S_H$ be a maximizing sequence for $T$, that is, $\|Tx_n\|\rightarrow 1$. 
     We can write 
     $$x_n = z_n+w_n,$$
     where $z_n\in M$ and $w_n\in M^\perp$ with 
     \begin{equation}\label{xyz1}
         1=\|x_n\|^2=\|z_n\|^2+\|w_n\|^2.
     \end{equation}
    We also have $\langle P_Tz_n, P_Tw_n\rangle = 0$. 
    Therefore,  $$\|P_Tx_n\|^2=\|P_Tz_n\|^2+\|P_Tw_n\|^2.$$
     Since  $P_T$ is a positive operator and $z_n\in \overline{\spn}M_T = \overline{\spn}M_{P_T}$, we have 
     \begin{equation}\label{zn}
         P_Tz_n =z_n.
     \end{equation} Together with equation (\ref{xyz1}) and the fact that $\|Tx\| =\|P_Tx\|$ for all $x\in H$, we get
      \begin{align}
       \lim \|P_Tw_n\|^2 & =1- \lim\|P_Tz_n\|^2\\\nonumber
       & = 1-\lim\|z_n\|^2\\\nonumber
       & =\|w_n\|^2.
     \end{align}
     Now, if $\lim\|w_n\| \neq 0$, then we have $\lim \frac{ \|P_Tw_n\|^2}{\|w_n\|^2} = 1$. Since $w_n\in M^\bot$, we will then have $\|P_T|_{M^\bot}\|=1$. 

     Observe that, $M^\bot =M_T^\bot$. Therefore, we will have $\|P_T|_{M_T^\bot}\|=1$. 
     But, by the assumption, $\|P_T|_{M^\perp}\|<1$. 
     Therefore, $\lim\|w_n\| = 0$. That is, $\lim \|x_n -z_n\| =0$ and $\|z_n\|\rightarrow 1$. 
     
     Take the sequence $\{y_n\}$ to be $y_n =\frac{z_n}{\|z_n\|}$. Then, by Equation (\ref{zn}), $y_n\in M_T$. Also, since $\|z_n\|\rightarrow 1$, we get 
     $$\lim\|x_n-y_n\|= \lim\|x_n-z_n\| = 0.$$  This implies $(ii)$. 

     Conversely assume that $(ii)$ holds. If possible, assume that $\|P_T|_{{M_T}^\perp}\|=\|T\|.$ Then there exists a sequence $\{w_n\} $ in $S_{M_T^\perp}$ such that $\lim\|Tw_n\|=\lim\|P_Tw_n\| = 1$. Since $w_n\in M_T^\perp$, we have $d(w_n, M_T)\geq\|w_n\|=1$ for all $n$,
     which contradicts $(ii)$.  
\end{proof}
From the condition $(ii)$ of Theorem \ref{indu} and Theorem \ref{hil}, it is evident that a shared property of maximizing sequence of both bounded linear functionals on Banach spaces and operators on a Hilbert space characterize the strong subdifferentiability of the norm of respective spaces.  

The equivalent characterization (condition $(ii)$) given in Theorem \ref{hil} does not rely on the orthogonality properties specific to Hilbert spaces. Consequently, this property can be verified for more general Banach spaces. So, it is worth considering whether this new reformulation characterizes the strong subdifferentiability of the norm of the space of bounded linear operators on a Banach space which is not necessarily a Hilbert space.

	In the remaining part of this section, we will prove that Theorem \ref{hil} can be extended to $\ell_p$ spaces, $1<p<\infty$. To prove the sufficient part, we need the following lemma, which is similar to \cite[Theorem 1]{MR0845870}.
 \begin{lem}\label{lemmabi}
      Let $X$ and $Y$ be Banach spaces. Let $B\in {B}({X\times Y})$ with $\|B\|=1$. Then the norm of $B(X\times Y)$ is strongly subdifferentiable at $B$ if the following condition holds. 
      For each $\varepsilon> 0$ there exists $\delta>0$ such that, whenever $(x,y)\in S_X\times S_Y$ satisfies $B(x,y)>1-\delta$, there exists $(x_0,y_0)\in S_X\times S_Y$ satisfying $B(x_0,y_0)=1$, $\|x-x_0\|<\varepsilon$ and $\|y-y_0\|<\varepsilon$ 
 \end{lem}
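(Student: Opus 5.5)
The plan is to view $B(X\times Y)$ as the dual of the projective tensor product $Z:=X\hat{\otimes}_\pi Y$, under the isometric identification $B(X\times Y)=(X\hat{\otimes}_\pi Y)^*$ with $B(x\otimes y)=B(x,y)$. Then Theorem \ref{indu}, in the $\varepsilon$–$\delta$ form of Remark \ref{rmkindu}, reduces the lemma to two claims about $J_Z(B)=\{z\in B_Z:\ B(z)=1\}$. First, $J_Z(B)\neq\emptyset$. Second, for every $\varepsilon>0$ there is $\eta>0$ such that every $z\in B_Z$ with $\operatorname{Re}B(z)>1-\eta$ satisfies $d(z,J_Z(B))<C\varepsilon$ for an absolute constant $C$. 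Two facts will be used throughout: $J_Z(B)$ is convex, and it contains $x_0\otimes y_0$ whenever $(x_0,y_0)\in S_X\times S_Y$ and $B(x_0,y_0)=1$.

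Nonemptiness comes from the hypothesis directly. Since $\|B\|=1$, we can pick $(x,y)\in S_X\times S_Y$ with $|B(x,y)|>1-\delta$. Multiplying $x$ by a unimodular scalar makes $B(x,y)$ real and positive. The hypothesis then gives $(x_0,y_0)$ with $B(x_0,y_0)=1$, so $x_0\otimes y_0\in J_Z(B)$.

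For the main estimate, fix $\varepsilon$ and take $\delta$ from the hypothesis. Let $z\in B_Z$ satisfy $\operatorname{Re}B(z)>1-\eta$, where $\eta$ is small and chosen later. By the definition of the projective norm, write
\[
\Bigl\|z-\sum_i\lambda_i x_i\otimes y_i\Bigr\|<\eta, \qquad x_i\in S_X,\ y_i\in S_Y,\ \lambda_i\ge 0,\ \sum_i\lambda_i\le 1+\eta .
\]
Rotating each $x_i$ by a unimodular scalar, absorbed into the scalar, we may assume $B(x_i,y_i)\ge 0$. This step needs care: the rotation must be compensated in the coefficient. The clean way is to write the scalar as $\lambda_i\theta_i$ and use the vector $\theta_i x_i$, so that $\lambda_i\ge0$ is kept.

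Then
\[
\sum_i\lambda_i\bigl(1-B(x_i,y_i)\bigr)<\sum_i\lambda_i-(1-2\eta)\le 3\eta .
\]
Call $i$ good if $B(x_i,y_i)>1-\delta$, and bad otherwise. Each bad index contributes at least $\lambda_i\delta$ to the left side, so the bad mass satisfies $\sum_{\mathrm{bad}}\lambda_i<3\eta/\delta$. Consequently $\Lambda:=\sum_{\mathrm{good}}\lambda_i$ lies within $1+3\eta/\delta$ of $1$ from both sides, up to $\eta$.

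For each good $i$, the hypothesis supplies $(x_i^0,y_i^0)$ with $B(x_i^0,y_i^0)=1$ and
\[
\|x_i\otimes y_i-x_i^0\otimes y_i^0\|\le\|x_i-x_i^0\|+\|y_i-y_i^0\|<2\varepsilon .
\]
Set $w=\Lambda^{-1}\sum_{\mathrm{good}}\lambda_i x_i^0\otimes y_i^0$. This is a convex combination of points of $J_Z(B)$, so $w\in J_Z(B)$. The triangle inequality gives
\[
\|z-w\|\le \eta+\sum_{\mathrm{bad}}\lambda_i+2\varepsilon\Lambda+|1-\Lambda|\,\|w\| ,
\]
and each term is at most about $3\varepsilon$ once $\eta\ll\varepsilon\delta$. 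This yields the second condition of Remark \ref{rmkindu}, so Theorem \ref{indu} shows the norm of $B(X\times Y)$ is strongly subdifferentiable at $B$.

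The main obstacle is the bookkeeping in the tensor decomposition: controlling the total mass of the bad terms by the Markov-type inequality above, and handling the complex case via the phase rotation, so that the hypothesis, which is stated for real values $B(x,y)>1-\delta$, actually applies to the good terms.
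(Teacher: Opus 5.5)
Your argument is correct in substance, but it takes a different route from the paper.

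\textbf{How the paper argues.} The paper never invokes Theorem \ref{indu}. It argues by contradiction, directly from Definition \ref{ssddef}:
\begin{itemize}
\item it takes small $C$ witnessing the failure of uniformity and picks an almost-maximizer $(x,y)$ of $B+C$;
\item it deduces $\operatorname{Re}B(x,y)>1-\delta$ and moves to a nearby exact maximizer $(x_0,y_0)$;
\item it uses $|C(x,y)-C(x_0,y_0)|\le\|C\|\,\|x\otimes y-x_0\otimes y_0\|_\pi$, together with the fact that evaluation at $x_0\otimes y_0$ lies in $J_{B(X\times Y)^*}(B)$, to trap the difference quotient within $\varepsilon\|C\|$.
\end{itemize}
Since $\|B+C\|$ is a supremum over elementary tensors, the paper only ever handles single pairs $(x,y)$. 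It needs no decomposition of general elements of $X\hat{\otimes}_\pi Y$.

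\textbf{What each route buys.} Your route depends on the cited, nontrivial characterization in Theorem \ref{indu}. You pay for it with the projective-norm decomposition and the Markov-type bound on the bad mass. In return you get more: the face $J_{X\hat{\otimes}_\pi Y}(B)$ is strongly exposed in the predual. Moreover, every near-maximizing element of $B_{X\hat{\otimes}_\pi Y}$ lies close to a convex combination of norm-attaining elementary tensors. In particular, $J_{X\hat{\otimes}_\pi Y}(B)$ is the closed convex hull of $\{x_0\otimes y_0: B(x_0,y_0)=1\}$.

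\textbf{One step to repair.} You cannot rotate so that every $B(x_i,y_i)\ge 0$ while keeping $\lambda_i\ge 0$. The identity $\lambda_i x_i\otimes y_i=(\lambda_i\overline{\theta_i})(\theta_i x_i)\otimes y_i$ shows the phase simply moves into the coefficient. The rotation is, however, unnecessary:
\begin{itemize}
\item Your Markov estimate holds verbatim with $\operatorname{Re}B(x_i,y_i)$ in place of $B(x_i,y_i)$, because $1-\operatorname{Re}B(x_i,y_i)\ge 0$. So define the good indices by $\operatorname{Re}B(x_i,y_i)>1-\delta$.
\item For each good index, rotate only that term: $B(\theta_i x_i,y_i)=|B(x_i,y_i)|>1-\delta$. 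Apply the hypothesis to $(\theta_i x_i,y_i)$.
\item Absorb the extra error $\|x_i-\theta_i x_i\|=|1-\theta_i|<\sqrt{2\delta}$ into your estimate, after shrinking $\delta$ so that $\sqrt{2\delta}\le\varepsilon$. This is legitimate because the hypothesis persists for smaller $\delta$.
\end{itemize}
Alternatively, read the hypothesis as $\operatorname{Re}B(x,y)>1-\delta$, which is how the paper's own proof uses it.
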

 \begin{proof}
      The proof of this lemma utilizes a similar idea employed in the proof of \cite[Theorem 1]{MR0845870}.

     Suppose $B\in {B}({X\times Y})$ with $\|B\|=1$ and the given condition holds. Assume that the norm of $B(X\times Y)$ is not strongly subdifferentiable at $B$. Then, by the definition of strong subdifferentiability, there exists $\varepsilon>0$ and  bounded bilinear functionals $C_n$ on $X\times Y$ with $\|C_n\|< \frac{1}{n}$ such that 
     \begin{equation}\label{con}
	\left|\left\|B+C_n\right\|-\left\|B\right\|-\operatorname{Re}\phi\left(C_n\right)\right| \geq \varepsilon\left\|C_n\right\| \text { for all }  \phi\in J_{{B}(X\times Y)^*}(B).
	\end{equation}
 Using the assumed condition, choose a number $\delta>0$ which satisfies the following. 
 
 If $\operatorname{Re} B(x, y)> 1-\delta$ for some $(x, y)\in S_X\times S_{Y}$, then there exists $(x_0, y_0)\in S_X\times S_{Y}$ satisfying 
 \begin{enumerate}
     \item[$(i)$] $B(x_0, y_0) =1$,
     \item[$(ii)$] $\|x-x_0\|<\frac{\varepsilon}{4}$,  $\|y-y_0\|<\frac{\varepsilon}{4}.$
     
 \end{enumerate}
We can assume, w. l. o. g,  that $\delta<\frac{\varepsilon}{2}$. From Equation (\ref{con}), we can find $C_n$ such that  $\|C_n\|<\frac{\delta}{2+\delta}$ which satisfies the Inequality (\ref{con}). Let $C = C_n$. 
Choose an element $\left(x, y\right) \in B_X \times B_{Y}$ such that
$$\left(B+C\right)\left(x, y\right)>\left\|B+C\right\|-\delta\left\|C\right\|.$$
Then we have:
\begin{align*}
\left\|B\right\| \geq \operatorname{Re} B\left(x, y\right) & =\left(B+C\right)\left(x, y\right)-\operatorname{Re} C\left(x, y\right) \geq\left(B+C\right)\left(x, y\right)-\left\|C\right\| \\
& \geq\left\|B+C\right\|-(1+\delta)\left\|C\right\|\\
&\geq\|B\|-(2+\delta)\left\|C\right\|\\
&> 1-\delta.
\end{align*}
Hence, $\operatorname{Re} B\left(x, y\right) >1-\delta$. By choice of the number $\delta$, there exists  $(x_0, y_0)\in S_X\times S_{Y}$ such that $B(x_0, y_0) =1$  and $\|x-x_0\|<\frac{\varepsilon}{4}$ and $\|y-y_0\|<\frac{\varepsilon}{4}$. 

Recall the isometric isomorphism $B(X\times Y) \cong(X \tilde{\otimes}_\pi Y)^*$. Let $c$ be the bounded linear functional on $X \tilde{\otimes}_\pi Y$ corresponding to the bilinear functional $C$. Then
\begin{align}\label{wer}
\left|C\left(x, y\right)-C\left(x_0, y_0\right)\right| & =\left|c\left(x \otimes y-x_0 \otimes y_0\right)\right| \\\nonumber
& \leq\left\|C\right\|\left\|x \otimes y-x_0 \otimes y_0\right\|_\pi\\\nonumber
& \leq \left\|C\right\|\left\|(x-x_0)\otimes y + x_0\otimes (y-y_0)\right\|_\pi\\\nonumber
& \leq \|C\| \frac{\varepsilon}{2}.
\end{align}
We also have:
\begin{equation}\label{qw}
    \left\|B+C\right\| \geq\operatorname{Re} \left(B+C\right)\left(x_0, y_0\right)=\left\|B\right\|+\operatorname{Re} C\left(x_0, y_0\right),
\end{equation}
From the Inequalities (\ref{wer}) and (\ref{qw}), we have
$$
\begin{aligned}
0 & \leq\left\|B+C\right\|-\left\|B\right\|-\operatorname{Re} C\left(x_0, y_0\right) \\
& <\left(B+C\right)\left(x, y\right)+\delta\left\|C\right\|-\left\|B\right\|-\operatorname{Re} C\left(x_{0}, y_0\right) \\
& <\delta\left\|C\right\|+\operatorname{Re} C(x, y)-\operatorname{Re} C(x_0, y_0) \\
&\leq \delta\left\|C\right\|+ \left|C\left(x, y\right)-C\left(x_0, y_0\right)\right| \\
& < \left\|C\right\|(\delta+\frac{\varepsilon}{2})\\
&< \|C\|\varepsilon,
\end{aligned}
$$
which is a contradiction to the Inequality (\ref{con}), since $(x_0, y_0)\in J_{{B}(X\times Y)^*}(B)$. Therefore, the norm of $B(X\times Y)$ is strongly subdifferentiable at $B$.
 \end{proof}
 We will next recall a theorem from \cite{MR3176146}.

 \begin{thm}\cite[Corollary 2.2]{MR3176146}\label{unif}
     A reflexive Banach space $X$ is uniformly smooth if and only if for every $\varepsilon>0$ there is $0<\eta(\varepsilon)<1$ such that, for all $f \in B_{X^*}$ and all $x \in S_X$ satisfying $|f(x)|>1-\eta(\varepsilon)$, there exists $f_0 \in S_{X^*}$ satisfying $\left|f_0(x)\right|=1$ and $\left\|f-f_0\right\|<\varepsilon$.
 \end{thm}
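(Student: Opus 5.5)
We prove the two implications separately. The forward one goes through uniform convexity of $X^*$. The converse needs a uniqueness statement for support functionals, which is the hard part.

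\emph{Uniformly smooth $\Rightarrow$ condition.} Since $X$ is uniformly smooth, $X^*$ is uniformly convex; let $\delta_{X^*}$ be its modulus of convexity. We use the form valid on the ball: if $f,g\in B_{X^*}$ and $\|(f+g)/2\|>1-\delta_{X^*}(\varepsilon)$, then $\|f-g\|<\varepsilon$. Fix $x\in S_X$ and $f\in B_{X^*}$ with $|f(x)|>1-\eta$.
\begin{itemize}
\item Let $J_x\in S_{X^*}$ be the (unique, by smoothness) functional with $J_x(x)=1$.
\item Put $\alpha=f(x)/|f(x)|$ and $f_0=\alpha J_x$, so $|f_0(x)|=1$.
\item Then
$$\Bigl\|\tfrac{f+f_0}{2}\Bigr\|\ \ge\ \Bigl|\tfrac{f(x)+\alpha}{2}\Bigr|\ =\ \tfrac{|f(x)|+1}{2}\ >\ 1-\tfrac{\eta}{2}.$$
\end{itemize}
Choosing $\eta(\varepsilon)=\min\{2\delta_{X^*}(\varepsilon),\tfrac12\}$ gives $\|f-f_0\|<\varepsilon$. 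Reflexivity is not needed in this direction.

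\emph{Condition $\Rightarrow$ uniformly smooth.} We use the \v{S}mulyan-type criterion: $X$ is uniformly smooth if and only if, whenever $x_n\in S_X$ and $f_n,g_n\in S_{X^*}$ satisfy $\operatorname{Re}f_n(x_n)\to1$ and $\operatorname{Re}g_n(x_n)\to1$, we have $\|f_n-g_n\|\to0$.
\begin{itemize}
\item Given such sequences, apply the hypothesis to $(f_n,x_n)$ and to $(g_n,x_n)$. This produces $f_0^n,g_0^n\in S_{X^*}$ with $|f_0^n(x_n)|=|g_0^n(x_n)|=1$, $\|f_n-f_0^n\|\to0$ and $\|g_n-g_0^n\|\to0$.
\item Since $\operatorname{Re}f_n(x_n)\to1$, after a unimodular rotation we may take $f_0^n(x_n)=1$, and likewise $g_0^n(x_n)=1$.
\item It then suffices to show $\|f_0^n-g_0^n\|\to0$.
\end{itemize}
This reduces the problem to a uniform estimate on the diameter of the faces $J(x_n)=\{\varphi\in S_{X^*}:\varphi(x_n)=1\}$. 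In particular we must prove that $X$ is smooth, i.e.\ that each face $J(x)$ is a singleton.

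\emph{Main obstacle: showing $J(x)$ is a singleton.} Suppose $f_0,g_0\in J(x)$ with $\|f_0-g_0\|=d>0$, and let $\eta=\eta(d/4)$.
\begin{itemize}
\item Choose $y\in S_X$ with $\operatorname{Re}(f_0-g_0)(y)>d/2$.
\item Consider $x_t=(x+ty)/\|x+ty\|$ for small $t>0$. Both $f_0(x_t)$ and $g_0(x_t)$ are within $O(t)$ of $1$, so for $t$ small they exceed $1-\eta$ in modulus.
\item The hypothesis, which is uniform in the base point, then gives functionals $f_t,g_t\in J(x_t)$ within $d/4$ of $f_0$ and $g_0$ respectively.
\end{itemize}
The contradiction should come from monotonicity of the subdifferential. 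As $t\downarrow0$, every $\varphi\in J(x_t)$ satisfies $\operatorname{Re}\varphi(y)\ge\max_{\psi\in J(x)}\operatorname{Re}\psi(y)-o(1)$, which is at least $\operatorname{Re}f_0(y)-o(1)$. But $g_t$ is $d/4$-close to $g_0$, and $\operatorname{Re}g_0(y)<\operatorname{Re}f_0(y)-d/2$. This incompatibility should force $d=0$.

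The same argument, run with quantitative $t$ chosen independently of $x$, should give the uniform bound $\operatorname{diam}J(x_n)\to0$ needed above. Reflexivity enters through James' theorem, which guarantees that the norming functionals and norming points used here are attained. I expect making this monotonicity step quantitative and uniform in $x$ to be the most delicate part of the proof.
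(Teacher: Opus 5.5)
The paper gives no proof of this statement. It quotes it from \cite{MR3176146}, where it is derived by applying a Bishop--Phelps--Bollob\'as type characterization of uniform convexity to $X^*$, with reflexivity used to identify $X^{**}$ with $X$. Your route is direct and correct, and it closes more easily than you expect.

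Your forward implication is complete as written. It is also the only direction the paper uses, in Proposition~\ref{uni}.

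In the converse, the monotonicity you invoke is exact, not asymptotic. If $\varphi\in J(x_t)$ and $\psi\in J(x)$, then for every $t>0$
\[
1+t\operatorname{Re}\psi(y)=\operatorname{Re}\psi(x+ty)\le\|x+ty\|=\operatorname{Re}\varphi(x+ty)\le 1+t\operatorname{Re}\varphi(y),
\]
so $\operatorname{Re}\varphi(y)\ge\operatorname{Re}\psi(y)$. Let $g$ be the functional the hypothesis produces from $(g_0,x_t)$, and set $\varphi=\overline{g(x_t)}\,g\in J(x_t)$. Taking $\psi=f_0$ gives
\[
\|\varphi-g_0\|\ge\operatorname{Re}(\varphi-g_0)(y)\ge\operatorname{Re}(f_0-g_0)(y)>d/2 .
\]
So you never need $f_t$. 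One bookkeeping point: the rotation costs $|g(x_t)-1|\le\|g-g_0\|+|g_0(x_t)-1|$. With $\eta(d/4)$ you therefore only get $\|\varphi-g_0\|<d/2+o(1)$, not $<d/4$ as you wrote. This still gives a contradiction for small $t$, because $\operatorname{Re}(f_0-g_0)(y)$ exceeds $d/2$ by a fixed amount; using $\eta(d/8)$ is cleaner. Hence $X$ is smooth.

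After that, no quantitative or uniform refinement is needed. Once every $J(x)$ is a singleton, your reduction forces $f_0^n=g_0^n$, so
\[
\|f_n-g_n\|\le\|f_n-f_0^n\|+\|g_0^n-g_n\|\to0 .
\]
All the uniformity comes from $\eta(\varepsilon)$ being independent of $x$.

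Reflexivity and James' theorem play no role anywhere in your argument:
\begin{itemize}
\item $J(x)\neq\emptyset$ follows from Hahn--Banach;
\item $y$ only has to almost norm $f_0-g_0$;
\item the \v{S}mulyan criterion requires no norm attainment.
\end{itemize}
So your argument proves the equivalence for an arbitrary Banach space; the condition then implies reflexivity a posteriori, since uniformly smooth spaces are reflexive. That is what your route buys over the cited one. The cited route is a one-line dualization once the uniform-convexity theorem is available, but there reflexivity is genuinely needed to pass from $X^{**}$ back to $X$.
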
 
 We will now give a sufficient condition for strong subdifferentiability of operator norm on Banach spaces when the range is uniformly smooth.
 
	\begin{prop}\label{uni}
	    Let $X$ and $Y$ be Banach spaces and $T\in S_{{B}(X, Y)}$. Then the norm of ${B}(X, Y)$ is strongly subdifferentiable at $T$ if the following conditions hold.
     \begin{enumerate}
     \item[$(i)$] $M_T\neq \emptyset$  and for every maximizing sequence $\{x_n\}$ of $T$, there exists a subsequence $\{x_{n_i}\}$ of $\{x_n\}$ such that $d(x_{n_i}, M_T)\rightarrow 0$.
     \item[$(ii)$] $Y$ is uniformly smooth. 
          \end{enumerate}
	\end{prop}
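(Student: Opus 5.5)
The plan is to embed $B(X,Y)$ isometrically into the bilinear forms $B(X\times Y^*)$ via $T\mapsto B_T(x,f)=f(Tx)$, and then apply Lemma \ref{lemmabi}. Strong subdifferentiability passes to subspaces: if the norm of a Banach space $Z$ is $SSD$ at $u\in W\subseteq Z$, then it is $SSD$ at $u$ in $W$. Indeed, by Hahn--Banach every functional in $J_{W^*}(u)$ extends to one in $J_{Z^*}(u)$, and conversely every element of $J_{Z^*}(u)$ restricts to one in $J_{W^*}(u)$. Hence the maxima in (\ref{ssdeq}) computed in $Z$ and in $W$ agree for directions $x\in B_W$, and uniformity over $B_Z$ implies uniformity over $B_W$. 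It therefore suffices to verify the hypothesis of Lemma \ref{lemmabi} for $B=B_T$ on $X\times Y^*$, where $\|B_T\|=\|T\|=1$.

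Fix $\varepsilon>0$. By condition $(i)$ (in the uniform form of Remark \ref{rmkindu}, obtained by contradiction from the subsequence statement), there is $\delta_1>0$ such that every $x\in S_X$ with $\|Tx\|>1-\delta_1$ satisfies $d(x,M_T)<\varepsilon'$, where $\varepsilon'$ is small and will be chosen below. Since $Y$ is uniformly smooth, it is reflexive, so Theorem \ref{unif} applies to $Y$. It gives $\eta=\eta(\varepsilon/2)$ such that for $g\in B_{Y^*}$ and $y\in S_Y$ with $|g(y)|>1-\eta$, there is $g_0\in S_{Y^*}$ with $|g_0(y)|=1$ and $\|g-g_0\|<\varepsilon/2$.

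Now suppose $(x,f)\in S_X\times S_{Y^*}$ satisfies $\operatorname{Re} f(Tx)>1-\delta$. Then $\|Tx\|>1-\delta$, so there is $x_0\in M_T$ with $\|x-x_0\|<\varepsilon'$. Set $y=Tx_0\in S_Y$. Then
\[
|f(y)|\ge \operatorname{Re} f(Tx)-\|T\|\|x-x_0\|>1-\delta-\varepsilon'.
\]
Choose $\delta,\varepsilon'$ with $\delta+\varepsilon'<\eta$ and $\varepsilon'<\varepsilon$. Theorem \ref{unif} then yields $g_0\in S_{Y^*}$ with $|g_0(Tx_0)|=1$ and $\|f-g_0\|<\varepsilon/2$. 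Multiply $g_0$ by the unimodular scalar $\lambda$ that makes $f_0:=\lambda g_0$ satisfy $f_0(Tx_0)=1$.

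The main obstacle is that this rotation could move $f_0$ away from $f$, so we must check that $\lambda$ is close to $1$. We have $f(Tx_0)$ within $\varepsilon/2$ of $g_0(Tx_0)$, and $\operatorname{Re} f(Tx_0)>1-\delta-\varepsilon'$ with $|f(Tx_0)|\le1$, so $f(Tx_0)$ lies near $1$. Hence $g_0(Tx_0)$ is a unimodular number within $\varepsilon/2+\sqrt{2(\delta+\varepsilon')}$ of $1$, and $|\lambda-1|=|g_0(Tx_0)-1|$ is small. This gives
\[
\|f-f_0\|\le \|f-g_0\|+|1-\lambda|<\varepsilon
\]
after shrinking constants, for instance by applying Theorem \ref{unif} with $\varepsilon/4$ and taking $\delta+\varepsilon'$ small enough that $\sqrt{2(\delta+\varepsilon')}<\varepsilon/4$. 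The pair $(x_0,f_0)$ then satisfies $B_T(x_0,f_0)=1$, $\|x-x_0\|<\varepsilon$ and $\|f-f_0\|<\varepsilon$. Lemma \ref{lemmabi} gives $SSD$ of $B(X\times Y^*)$ at $B_T$, and the subspace remark transfers this to $B(X,Y)$ at $T$.
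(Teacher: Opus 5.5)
Your proof is correct and follows essentially the same route as the paper. Both view $T$ as the bilinear form $(x,f)\mapsto f(Tx)$ on $X\times Y^*$ and verify the hypothesis of Lemma \ref{lemmabi}: first move $x$ to some $x_0\in M_T$ using the uniform form of (i), then move $f$ to a norming functional of $Tx_0$ using Theorem \ref{unif}. The differences are minor and in your favour: you use an isometric embedding plus heredity of $SSD$ to subspaces where the paper uses the identification $B(X,Y)\cong B(X\times Y^*)$ from reflexivity of $Y$, and your unimodular correction $f_0=\lambda g_0$ supplies a step the paper skips, since Theorem \ref{unif} only gives $|g_0(Tx_0)|=1$ while the paper's (P1) asserts $y_0^*\in J_{Y^*}(Tx_0)$ directly.
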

 \begin{proof}
	Assume that an operator $T\in S_{{B}(X, Y)}$ and the conditions $(i), (ii)$ hold. Since $Y$ is uniformly smooth, we have the isometric isomorphism between the following Banach spaces.
 $${B}(X, Y)\cong{B}(X, Y^{**}) \cong(X\hat{\otimes}_\pi Y^*)^* \cong {B}(X\times Y^*).$$
 
 So it is enough to prove that the norm of ${B}(X\times Y^*)$ is strongly subdifferentiable at $T$ when viewed as a bounded bilinear functional on $(X\times Y^*)$. Using Lemma \ref{lemmabi}, it is enough to prove the following property (P).  
 
 (P) : For each $\varepsilon>0$, there exists $\delta>0$ such that, whenever $(x, y^*)\in S_{X}\times S_{Y^*}$ satisfies $T(x, y^*)>1-\delta$, there exists $x_0\otimes y_0^*\in J_{B(X\times Y^*)^*}(T)$ satisfying $\|x-x_0\|<{\varepsilon}$ and $\|y-y_0^*\|<{\varepsilon}$. 

To see this, let $\varepsilon > 0$. Since $Y$ is uniformly smooth, by Theorem \ref{unif}, we have the following property.

(P1) : We can find $\delta_1> 0$ such that whenever an element $(y, y^*)\in S_Y\times S_{Y^*}$ satisfies $|y^*(y)|> 1-\delta_1$, there exists an element $y_0^*\in J_{Y^*}{(y)}$ with $\|y^*-y_0^*\|<\frac{\varepsilon}{2}$.

From $(i)$, we also have the following. 

(P2) : There exists $\delta_2>0$ such that whenever an element $x\in B_X$ satisfies $\|Tx\|>1-\delta_2$, we have $d(x, M_T)<\frac{\delta_1}{2}$.

Define $$\delta = \min\left\{\frac{\delta_1}{2},\delta_2,\varepsilon\right\}.$$ 

Choose an element $(x, y^*)\in S_X\times S_{Y^*}$ such that $T(x, y^*)>1-\delta$.
Then 
$$\|Tx\|\geq T(x, y^*)> 1-\delta.$$
Then, by property (P2), there exists $x_0\in M_T$ such that 
\begin{equation*}
    \|x-x_0\|<\frac{\delta_1}{2}.
\end{equation*}
Therefore,
\begin{equation*}
    |y^*(Tx_0)|= |y^*(Tx_0-Tx+Tx)|> \operatorname{Re}y^*(Tx_0-Tx+Tx)>\frac{-\delta_1}{2}+1-\delta>1-\delta_1.
\end{equation*}
Since $Tx_0\in S_Y$, by property (P1), there exists $y_0^*\in J_{Y^*}{(Tx_0)}$ such that $\|y^*-y_0^*\|<{\varepsilon}.$ 

Thus, we have $(x_0, y_0^*)\in J_{{B}(X\times Y^*)^*}(T)$ which satisfies the requirements of property (P) above. Therefore, the norm of ${B}(X\times Y^*) $ is strongly subdifferentiable at $T$. 
\end{proof}
We are now in a position to give the main theorem of this section. We will next obtain a characterization of strong subdifferentiability of the norm of bounded linear operators on $\ell_p$ spaces, analogous to that of operators on Hilbert space. Towards this, we recall a few definitions and results. 
\begin{defn}
     Let $X$ be a Banach space. For $f\in S_{X^*}$ and $\delta>0$, the slice of $B_X$ corresponding to $f$ and $\delta$ is defined to be  $S(f, \delta, B_X) = \{x\in B_X:\operatorname{Re}f(x)>1-\delta\}$.
\end{defn}
\begin{rmk}\label{slice}
    It is easy to see that if $X$ is a uniformly convex Banach space, then for an $f\in S_{X^*}$ and $\varepsilon>0$ we have  $\operatorname{diam}S(f, \delta(\varepsilon), B_X)<\varepsilon$, where $\delta(\varepsilon)$ is the modulus of convexity of $X$ corresponding to $\varepsilon$.
\end{rmk}
We will now recall the definition of $M$-ideals in Banach spaces.
\begin{defn}
    A closed subspace $J$ of a Banach space $X$ is called an $M$-ideal if there exists a closed subspace $J'$ of $X^*$ such that $X^*=J^\bot\oplus_1 J'$, where $J^\bot = \{f\in X^*: f|_J =0\}$.
\end{defn}
We need the following property of $M$-ideals in the proof of our main theorem. 
\begin{lem}\cite{MR1238713}\label{extrememideal}
    Let $J$ be an $M$-ideal in a Banach space $X$. Then 
    $$\operatorname{ext} B_{X^*} = \operatorname{ext} B_{J^\bot}\cup \operatorname{ext} B_{J^*}.$$
\end{lem}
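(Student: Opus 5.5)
The plan is to reduce the statement to a general fact about $\ell_1$-sums, after identifying the complement $J'$ with $J^*$. By definition of an $M$-ideal we have $X^*=J^\perp\oplus_1 J'$, so the whole argument takes place inside this decomposition. Throughout, ``$\operatorname{ext} B_{J^*}$'' is understood via the isometric identification of $J^*$ with $J'$ established in the first step.

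\emph{Step 1: $J'\cong J^*$ isometrically via restriction.} Consider $R:J'\to J^*$, $R(f)=f|_J$.
\begin{enumerate}
\item[$(a)$] $R$ is injective, since $J'\cap J^\perp=\{0\}$.
\item[$(b)$] $R$ is contractive, i.e.\ $\|f|_J\|\le\|f\|$.
\item[$(c)$] $R$ is surjective and isometric. Given $g\in J^*$, Hahn--Banach gives $f\in X^*$ with $f|_J=g$ and $\|f\|=\|g\|$. Decompose $f=f_1+f_2$ with $f_1\in J^\perp$ and $f_2\in J'$. Then $f_2|_J=g$, and
$$\|g\|=\|f\|=\|f_1\|+\|f_2\|\ge \|f_2\|\ge \|f_2|_J\|=\|g\|.$$
Hence $\|f_2\|=\|g\|$ (and $f_1=0$), so $R(f_2)=g$ with $\|f_2\|=\|g\|$.
\end{enumerate}
Therefore $X^*\cong J^\perp\oplus_1 J^*$, and it suffices to prove: if $Z=A\oplus_1 B$, then $\operatorname{ext}B_Z=\operatorname{ext}B_A\cup\operatorname{ext}B_B$, where $A,B$ are regarded as subspaces of $Z$.

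\emph{Step 2: extreme points lie in one summand.} Let $z=a+b$ be extreme in $B_Z$. Then $\|z\|=1$, since a point of norm less than $1$ is the midpoint of two distinct points of $B_Z$ along a small segment in the direction of $z$ (or of any vector, if $z=0$). Suppose both $a\neq0$ and $b\neq0$. Then
$$z=\|a\|\,\frac{a}{\|a\|}+\|b\|\,\frac{b}{\|b\|},\qquad \|a\|+\|b\|=1.$$
This is a nontrivial convex combination of two distinct points of $B_Z$, which is a contradiction. Hence $z\in A$ or $z\in B$. Moreover, $z$ is then clearly extreme in the smaller ball $B_A$ (respectively $B_B$), since $B_A\subseteq B_Z$.

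\emph{Step 3: extreme points of a summand are extreme in $B_Z$.} Let $a\in\operatorname{ext}B_A$, so $\|a\|=1$, and suppose $a=\tfrac12(z_1+z_2)$ with $z_i=a_i+b_i\in B_Z$. Comparing components gives $a=\tfrac12(a_1+a_2)$ and $b_1+b_2=0$. Then
$$2=2\|a\|\le\|a_1\|+\|a_2\|\le 2-\|b_1\|-\|b_2\|,$$
which forces $b_1=b_2=0$. Now $a_1,a_2\in B_A$, so extremality of $a$ in $B_A$ gives $a_1=a_2=a$. The same argument applies to $B$.

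The only genuinely nontrivial point is the identification in Step~1. It relies on the fact that the $\ell_1$-decomposition makes the $J'$-component of a norm-preserving Hahn--Banach extension norm-preserving as well; the rest is routine convexity.
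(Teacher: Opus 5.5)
Your proof is correct. The paper gives no proof of its own and simply cites the monograph of Harmand, Werner and Werner, and your route is the standard argument there: restriction identifies $J'$ isometrically with $J^*$, and the extreme points of the unit ball of an $\ell_1$-sum are exactly the extreme points of the unit balls of the summands. One degenerate case needs flagging: if $J=\{0\}$ or $J=X$, one summand is $\{0\}$ and $\operatorname{ext}B_{\{0\}}=\{0\}$, so the statement as written fails and so does your step ``so $\|a\|=1$'' in Step~3; both summands should therefore be assumed nontrivial, as they are in the paper's application to $K(\ell_p,\ell_q)\subset B(\ell_p,\ell_q)$.
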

We will next recall a theorem that characterizes the extreme points in the duals of operator spaces.
\begin{thm}\cite{MR0682665}\label{extreruess}
     Let $X$ and $Y$ be Banach spaces. Then we have:  $$\operatorname{ext} B_{{K}(X, Y)^*} =\operatorname{ext} B_{X^{**}}\otimes \operatorname{ext} B_{Y^*}.$$ We may replace ${K}(X, Y)$ by any linear subspace containing $X^* \otimes Y$.
\end{thm}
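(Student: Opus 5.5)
The plan is to realize $E$ (any subspace of $B(X,Y)$ with $X^*\otimes Y\subseteq E\subseteq K(X,Y)$; the general case in the last sentence is handled the same way once the embedding below is available) as a space of continuous functions on a compact set. I then prove the two inclusions separately.

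Let $K=B_{X^{**}}\times B_{Y^*}$ with the product of the weak$^*$ topologies, which is compact. For $T\in E$ define
$$\hat T(x^{**},y^*)=x^{**}(T^*y^*).$$
Since $T$ is compact, $T^*$ is norm-to-norm compact, and from this one checks that $\hat T$ is jointly continuous on $K$. By Goldstine's theorem $\|\hat T\|_\infty=\|T\|$, so $T\mapsto\hat T$ is an isometric embedding $E\hookrightarrow C(K)$. For the general subspace in the last sentence, I would restrict to the weak$^*$-closures of the relevant sets, or argue only with $X^*\otimes Y$-separation, wherever compactness of $T$ was used.

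\emph{Inclusion $\subseteq$.} By the Arens--Kelley lemma (Hahn--Banach plus Krein--Milman), every extreme point of $B_{E^*}$ has the form $\lambda\,\delta_{(x^{**},y^*)}|_E$, that is $\lambda\, x^{**}\otimes y^*$, with $|\lambda|=1$. Absorbing $\lambda$ into $y^*$, it remains to check that $x^{**}$ and $y^*$ are themselves extreme. Suppose $x^{**}=\tfrac12(a+b)$ with $a\neq b$ in $B_{X^{**}}$. Then $x^{**}\otimes y^*$ is the midpoint of $a\otimes y^*$ and $b\otimes y^*$. These are distinct on $E$ because $E\supseteq X^*\otimes Y$: pick $f\in X^*$ with $a(f)\neq b(f)$ and $y$ with $y^*(y)\neq0$; note $y^*\ne0$, since $0$ is not extreme. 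This contradicts extremality, and the argument for $y^*$ is symmetric.

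\emph{Inclusion $\supseteq$.} Fix $x^{**}\in\operatorname{ext}B_{X^{**}}$ and $y^*\in\operatorname{ext}B_{Y^*}$, and suppose $x^{**}\otimes y^*=\tfrac12(\varphi_1+\varphi_2)$ with $\varphi_i\in B_{E^*}$. By Hahn--Banach and Riesz, represent each $\varphi_i$ by a measure $\mu_i$ on $K$ with $\|\mu_i\|\le1$. The average $\mu=\tfrac12(\mu_1+\mu_2)$ then has norm $1$ and represents $x^{**}\otimes y^*$. The goal is to show that $\mu$, and hence each $\mu_i$, acts on $E$ exactly as $\delta_{(x^{**},y^*)}$ does. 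Testing against $f\otimes y$ gives
$$\int_K a(f)\,b(y)\,d\mu(a,b)=x^{**}(f)\,y^*(y).$$
Fixing $y$ and letting $f$ vary, this says that the $X^{**}$-valued weak$^*$ integral $\int a\,b(y)\,d\mu$ equals $y^*(y)\,x^{**}$. I would first normalize, replacing $\mu$ by the positive probability measure $|\mu|$ after multiplying the coordinate $b$ by the density $d\mu/d|\mu|$, which is unimodular. Then I would choose $y$ with $y^*(y)$ close to $1$, $\|y\|\le1$. This forces $|\mu|$ to concentrate where $b(y)$ is close to $1$. On that set, the pushforward of $|\mu|$ under $(a,b)\mapsto a$ (reweighted by $b(y)$) is a near-probability measure on $B_{X^{**}}$ with barycenter near $x^{**}$.

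Bauer's principle says that a probability measure on a compact convex set whose barycenter is an extreme point is the Dirac mass there. Passing to the limit over $y$ in this way should give that the $a$-marginal is $\delta_{x^{**}}$. Symmetrically, the $b$-marginal is $\delta_{y^*}$, so $|\mu|$ is a Dirac mass and the $\varphi_i$ coincide with $x^{**}\otimes y^*$. This concentration step, where the tensor structure is coupled with extremality in each factor separately, is the main obstacle. What I would try first is to do the marginal computation with the continuous functions $(a,b)\mapsto a(f)$ and $(a,b)\mapsto b(y)$ directly, rather than through near-optimal $y$.
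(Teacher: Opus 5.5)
The paper gives no proof of this statement; it is quoted from Ruess--Stegall, and the last sentence is meant for subspaces $E$ with $X^*\otimes Y\subseteq E\subseteq K(X,Y)$. In that setting your embedding into $C(B_{X^{**}}\times B_{Y^*})$ is correct, and so is your proof of $\subseteq$ (Arens--Kelley, then splitting via $a\otimes y^*$, $b\otimes y^*$ using $X^*\otimes Y\subseteq E$). Your aside that subspaces not contained in $K(X,Y)$ can be handled ``the same way'' is wrong. For $E=B(\ell_p,\ell_q)$, the $M$-ideal decomposition used in Theorem~\ref{main1} produces extreme points in $K(\ell_p,\ell_q)^\perp$. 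These vanish on $\ell_p^*\otimes\ell_q$, so they are not of the form $x^{**}\otimes y^*$. Joint weak$^*$-continuity of $\hat T$ is therefore essential, not a technicality.

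The genuine gap is in $\supseteq$. Every $\hat T$ is invariant under $(a,b)\mapsto(\lambda a,\bar\lambda b)$ with $|\lambda|=1$, so a representing measure on $K$ is determined only up to this circle action, and the claims you aim for are false. Take $\nu=\frac12\bigl(\delta_{(x^{**},y^*)}+\delta_{(-x^{**},-y^*)}\bigr)$. It is a probability measure representing $x^{**}\otimes y^*$, and your normalization leaves it unchanged. Yet $\int b(y)\,d\nu=0$ for every $y$. Consequently:
\begin{itemize}
\item $\nu$ does not concentrate where $b(y)\approx 1$;
\item your reweighted pushforward has total mass $0$, so Bauer does not apply;
\item the $a$-marginal is $\frac12(\delta_{x^{**}}+\delta_{-x^{**}})$, not $\delta_{x^{**}}$;
\item $\nu$ is not a Dirac mass.
\end{itemize}

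What you must show is that the image $\pi_*\nu$ is $\delta_{x^{**}\otimes y^*}$, where $\pi(a,b)=a\otimes b|_E$ is the continuous map onto the weak$^*$-compact set $S=\pi(K)$. By Bauer's criterion in $B_{E^*}=\overline{\conv}^{w^*}S$, this is exactly extremality: take $\nu$ to be the average of representing probability measures on $S$ for $\varphi_1,\varphi_2$. This also repairs your passage from $\mu$ to the $\mu_i$, which as written is vacuous.

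One way to prove it:
\begin{enumerate}
\item Given $\varepsilon$, pick $f_0\in B_{X^*}$ and $y_0\in B_Y$ with $x^{**}(f_0),\,y^*(y_0)>1-\varepsilon$.
\item Rotate each point so that $a(f_0)\ge 0$. This leaves $\pi$ unchanged; work with integrals rather than pushforwards, since the rotation is only Borel.
\item From $\int\operatorname{Re}\,a(f_0)b(y_0)\,d\nu>1-2\varepsilon$ you get $\int|1-a(f_0)|\,d\nu=O(\varepsilon)$ and $\int|1-b(y_0)|\,d\nu=O(\sqrt\varepsilon)$.
\item Compare with $\int a\,b(y_0)\,d\nu=y^*(y_0)x^{**}$ and $\int a(f_0)\,b\,d\nu=x^{**}(f_0)y^*$. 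This puts the two marginal barycentres within $O(\sqrt\varepsilon)$ in norm of $x^{**}$ and $y^*$.
\item Compactness plus Bauer then make the marginals concentrate in any prescribed weak$^*$-neighbourhoods of $x^{**}$ and $y^*$.
\item Continuity of $\pi$ (the one place compactness of the operators in $E$ is used) gives $\pi_*\nu(S\setminus W)\to 0$ for each neighbourhood $W$ of $x^{**}\otimes y^*$.
\end{enumerate}
The missing idea is this $\varepsilon$-dependent phase normalization, with the limit taken on $S$ rather than on $K$.
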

The following is the main theorem of this section, in which we give an affirmative answer, in the case of $\ell_p$ space, to Question \ref{qn1} proposed in the introduction. This theorem has many interesting applications including the denseness of strongly subdifferentiable operators and a characterization of Fr\'{e}chet differentiability, both of which are discussed in Section 3.
\begin{thm}
     \label{main1}
     Let $1< p, q<\infty$ and an operator $T\in {B}(\ell_p, \ell_q)$   with $\|T\|=1$. Then the following are equivalent. 
     \begin{enumerate}
    \item[$(i)$] Norm of ${B}(\ell_p, \ell_q)$ is strongly subdifferentiable at $T$.
		\item[$(ii)$] $M_T\neq \emptyset$  and for every maximizing sequence $\{x_n\}$ of $T$, there exists a subsequence $\{x_{n_i}\}$ of $\{x_n\}$ such that $d(x_{n_i}, M_T)\rightarrow 0$.
  \end{enumerate}
\end{thm}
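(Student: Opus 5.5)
The direction $(ii)\Rightarrow(i)$ is immediate. For $1<q<\infty$ the space $\ell_q$ is uniformly smooth, so Proposition \ref{uni} applies directly with $X=\ell_p$, $Y=\ell_q$. All the work is in $(i)\Rightarrow(ii)$. For that I plan to combine the strong exposedness description of $SSD$ points (recalled from \cite{MR1216708}) with the $M$-ideal structure of ${K}(\ell_p,\ell_q)$ inside ${B}(\ell_p,\ell_q)$. I will use the classical fact that ${K}(\ell_p,\ell_q)$ is an $M$-ideal in ${B}(\ell_p,\ell_q)$ for $1<p,q<\infty$, citing \cite{MR1238713}.

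\emph{Step 1: set up the test functionals.} Write $\mathcal{B}={B}(\ell_p,\ell_q)$ and $\mathcal{K}={K}(\ell_p,\ell_q)$, so that $\mathcal{B}^*=\mathcal{K}^\perp\oplus_1 J'$, where $J'\cong \mathcal{K}^*\cong \ell_p\hat{\otimes}_\pi\ell_{q'}$. Let $\{x_n\}$ be a maximizing sequence for $T$, and choose $y_n^*\in S_{\ell_{q'}}$ with $y_n^*(Tx_n)=\|Tx_n\|$. Then $\varphi_n=x_n\otimes y_n^*\in B_{\mathcal{B}^*}$ lies in the $J'$-component and satisfies $\varphi_n(T)\to 1$. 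Since $T$ is an $SSD$ point, the face $J_{\mathcal{B}^*}(T)$ is strongly exposed, so there are $\psi_n\in J_{\mathcal{B}^*}(T)$ with $\|\varphi_n-\psi_n\|\to 0$.

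\emph{Step 2: remove the singular part.} Decompose $\psi_n=\psi_n^s+\psi_n^k$ with $\psi_n^s\in\mathcal{K}^\perp$ and $\psi_n^k\in J'$. By the $\ell_1$-decomposition,
\begin{equation*}
\|\varphi_n-\psi_n\|=\|\psi_n^s\|+\|\varphi_n-\psi_n^k\|.
\end{equation*}
Hence $\|\psi_n^s\|\to0$ and $\|\psi_n^k\|\to1$. We have $1=\psi_n(T)\le\|\psi_n^s\|+\|\psi_n^k\|=1$, and $\operatorname{Re}\psi_n^s(T)\le\|\psi_n^s\|$, $\operatorname{Re}\psi_n^k(T)\le\|\psi_n^k\|$. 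Equality must therefore hold in both, so $\psi_n^k/\|\psi_n^k\|\in J_{\mathcal{B}^*}(T)$.

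Now view $\psi_n^k/\|\psi_n^k\|$ as an element of $\ell_p\hat{\otimes}_\pi\ell_{q'}$ of norm one. It admits a representation $\sum_i\lambda_i u_i\otimes v_i$ with $u_i\in S_{\ell_p}$, $v_i\in S_{\ell_{q'}}$, $\lambda_i\ge0$ and $\sum\lambda_i$ arbitrarily close to $1$. I expect this to be the most delicate point. Here I would use that $\ell_p\hat\otimes_\pi\ell_{q'}$ has a nuclear representation attaining the norm, or I would argue via extreme points instead. Since $\psi_n^k/\|\psi_n^k\|$ norms $T$, Lemma \ref{extrememideal} and Theorem \ref{extreruess} show that the extreme points of the weak$^*$-compact face $J_{\mathcal{B}^*}(T)\cap J'$ are of the form $u\otimes v$ with $v(Tu)=1$, that is, with $u\in M_T$. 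Either route shows in particular that $M_T\neq\emptyset$.

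\emph{Step 3: locate a point of $M_T$ near $x_n$.} Let $z_n^*\in S_{\ell_{p'}}$ norm $x_n$. Pick $w_n\in S_{\ell_q}$ with $y_n^*(w_n)=1$, which is possible by reflexivity. Test everything on the rank-one operator $A_n=z_n^*\otimes w_n\in S_{\mathcal{B}}$. Then $\varphi_n(A_n)=1$, so the $J'$-part of $\psi_n$ gives
\begin{equation*}
\operatorname{Re}\sum_i\lambda_i z_n^*(u_i)\,v_i(w_n)\to 1.
\end{equation*}
Because each summand has modulus at most $1$ and $\sum_i\lambda_i\approx1$, some index $i$ satisfies $|z_n^*(u_i)|>1-\eta_n$ with $\eta_n\to0$. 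After multiplying $u_i$ by a unimodular scalar $\alpha$ (note $M_T$ is invariant under such scalars), both $x_n$ and $\alpha u_i$ lie in the slice $S(z_n^*,\eta_n,B_{\ell_p})$. By uniform convexity of $\ell_p$ and Remark \ref{slice}, $\|x_n-\alpha u_i\|\to0$. Thus $d(x_n,M_T)\to0$ along the whole sequence, which gives $(ii)$.

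I expect the main obstacle to be Step 2: justifying that the $J'$-component of an element of $J_{\mathcal{B}^*}(T)$ can be written as a convex combination, or a limit of convex combinations, of elementary tensors $u\otimes v$ with $u\in M_T$, with control that survives the averaging argument in Step 3. If the nuclear representation is awkward, the fallback is to apply Krein--Milman within the face together with Lemma \ref{extrememideal}. The averaging estimate in Step 3 is stable under norm-approximating the face element by a finite convex combination, so an approximate representation suffices.
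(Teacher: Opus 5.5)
\textbf{Verdict: there is a genuine gap in Step 2, at the point you flag as delicate, and neither route you offer closes it.} The rest is sound. Your architecture matches the paper's: $(ii)\Rightarrow(i)$ via Proposition \ref{uni}; for $(i)\Rightarrow(ii)$, approximate $x_n\otimes y_n^*$ by a functional norming $T$, test on the rank-one operator $A_n=z_n^*\otimes w_n$, and finish with slices in the uniformly convex space $\ell_p$. Steps 1 and 3 are fine. The $\ell_1$-splitting in Step 2 is a valid substitute for the paper's use of Theorem \ref{indu}: it does give $\psi_n^k/\|\psi_n^k\|\in J_{\ell_p\hat\otimes_\pi\ell_{q'}}(\tilde T)$ within $o(1)$ of $x_n\otimes y_n^*$.

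\emph{Nuclear route.} A representation with $\sum_i\lambda_i$ only close to $1$ tells you, via $\sum_i\lambda_i v_i(Tu_i)=1$, merely that most of the weight sits on $u_i$ with $\|Tu_i\|$ close to $1$. Step 3 would then place $x_n$ near a point that is itself only almost norming, which is circular. An exact representation with $\sum_i\lambda_i=1$ is precisely property $(N)$, which the paper does not have for $(\ell_p,\ell_{q'})$.

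\emph{Extreme-point route.} The set $J_{\mathcal{B}^*}(T)\cap J'$ is not $w^*$-compact in general. For $p\le q$, $J'$ is a proper $w^*$-dense subspace of $\mathcal{B}^*$, since it separates the points of $\mathcal{B}$. Concretely, for $p=q$ and $T=I$, every $w^*$-cluster point of $e_n\otimes e_n^*$ is a nonzero element of $J_{\mathcal{B}^*}(I)\cap\mathcal{K}^\perp$, hence lies outside $J'$. So Krein--Milman cannot be applied inside that set. Moreover, Krein--Milman only ever yields $w^*$-approximation by convex combinations of extreme points, never the norm approximation you say Step 3 tolerates.

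\textbf{The missing idea (this is how the paper argues).} Apply Krein--Milman to the whole face $J_{\mathcal{B}^*}(T)$, which is $w^*$-compact, and use that you only evaluate at one compact operator.
\begin{enumerate}
\item Extreme points of this face are extreme in $B_{\mathcal{B}^*}$. By Lemma \ref{extrememideal} and Theorem \ref{extreruess}, each lies either in $\operatorname{ext}B_{\mathcal{K}^\perp}$ or is of the form $u\otimes v$ with $v(Tu)=1$, i.e.\ $u\in M_T$.
\item Choose a finite convex combination $\sum_j t_j\chi_j$ of such extreme points with $|\sum_j t_j\chi_j(A_n)-\psi_n(A_n)|<1/n$. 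Only $w^*$-approximation is needed for this.
\item Since $A_n$ is rank one, hence compact, the $\mathcal{K}^\perp$-terms vanish at $A_n$. Also $|\psi_n(A_n)-1|\le\|\psi_n-\varphi_n\|$. Hence $\operatorname{Re}\sum_j t_j z_n^*(u_j)v_j(w_n)\to 1$, summing over the tensor terms alone, whose total weight is at most $1$.
\end{enumerate}
Your averaging and slice argument then runs verbatim, and it also yields $M_T\neq\emptyset$. With this device, the removal of the singular part in Step 2 is no longer needed.
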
     
\begin{proof} The implication $(ii)\Rightarrow(i)$ follows from Proposition \ref{uni}. 

We will now prove the implication $(i)\Rightarrow (ii)$. For, it is enough to prove that, for each $\varepsilon>0$, there exists $\delta >0$ such that
\begin{equation}\label{nec}
    d(x, M_T)\leq \varepsilon \mbox{ whenever } \|T(x)\|>1-\delta \mbox{ for some } x\in B_{\ell_p}.
\end{equation} 
To begin, assume that the norm of ${B}(\ell_p, \ell_q)$ is strongly subdifferentiable at an operator $T\in S_{{B}(\ell_p, \ell_q)}$. 
Observe that, we have the following isometric identification,
 $${B}(\ell_p, \ell_q) \cong (\ell_p\hat{\otimes}_\pi \ell_q^*)^*.$$
 under the map 
 \begin{equation}\label{tilde}
     {B}(\ell_p, \ell_q)\ni T \rightarrow \tilde{T}\in (\ell_p\hat{\otimes}_\pi \ell_q^*)^*,
 \end{equation}
 where the action of $\tilde{T}$ on $\ell_p\hat{\otimes}_\pi \ell_q^*$ is given by 
$$\tilde{T}\left(\sum_{i=1}^nx_i\otimes y_i^*\right) = \sum_{i=1}^ny_i^*(Tx_i). $$
Therefore, the norm of the dual space $(\ell_p\hat{\otimes}_\pi \ell_q^*)^*$ is strongly subdifferentiable at $\tilde{T}$.

Fix $\varepsilon >0$ and let $\delta_1>0$ be the modulus convexity of the space $\ell_p$ corresponding to $\varepsilon$. 
Apply Remark \ref{rmkindu} to obtain $\delta >0 $ such that 
\begin{equation}\label{pr}
    d(v,J_{\ell_p\hat{\otimes}_\pi \ell_q^*}(\tilde{T}))< \delta_1 \mbox{ whenever } \tilde{T}(v) >1-\delta \mbox{ for some }  v\in B_{\ell_p\hat{\otimes}_\pi \ell_q^*}.
\end{equation}
We will now prove that $T$ satisfies the requirements of (\ref{nec}) corresponding to the $\delta>0$ found in (\ref{pr}). For, let $x\in S_{\ell_p}$ such that $\|Tx\|>1-\delta$. Then there exists a bounded linear functional $y^*\in S_{\ell_q^*}$ such that $ y^*(Tx) = \|Tx\|$. But
$$\tilde{T}(x\otimes y^*) = y^*(Tx) = \|Tx\| >1-\delta. $$
Therefore, from the expression (\ref{pr}), it follows that there exists an element $u\in J_{\ell_p\hat{\otimes}_\pi \ell_q^*}(\tilde{T})$ such that 
$$\|x\otimes y^*-u\|_\pi<\delta_1.$$
At this stage, we will try to get a representation of $u$. For, we will look at the set $J_{{B}(\ell_p, \ell_q)^*}(T)$. It is clear that 
$$u\in J_{\ell_p\hat{\otimes}_\pi \ell_q^*}(\tilde{T})\subseteq J_{{B}(\ell_p, \ell_q)^*}(T).$$
Observe that $J_{{B}(\ell_p, \ell_q)^*}(T)$ is a $w^*$-closed extremal subset of the closed unit ball of $ {{B}(\ell_p, \ell_q)^*}$, where a subset $E$ of a convex set $C$ is called extremal if $x,y\in E$ whenever $x,y\in C$ and $tx+(1-t)y\in E$, $t\in (0,1)$.  

Hence, $J_{{B}(\ell_p, \ell_q)^*}(T)$ is the $w^*$-closed convex hull of its extreme points. In particular, 
   $$u\in J_{{B}(\ell_p, \ell_q)^*}(T) =  \overline{\conv}^{w^*}(\operatorname{ext}J_{{B}(\ell_p, \ell_q)^*}(T)).$$ 
   That is,
   $$ u=w^*-\lim_{k \rightarrow \infty} \sum_n^{m_k} t_n^k \varphi_n^k,$$ where $ \sum_n^{m_k} t_n^k =1$ for each $k$, and $\varphi_n^k\in \operatorname{ext}J_{{B}(\ell_p, \ell_q)^*}(T)$ for all $n,k$, consequently, $\tilde{T}(\varphi_n^k) =1$. 
   
   Since $J_{{B}(\ell_p, \ell_q)^*}(T)$ is extremal, we have 
   \begin{equation}\label{lkj}
       \operatorname{ext}J_{{B}(\ell_p, \ell_q)^*}(T)\subseteq \operatorname{ext}B_{{B}(\ell_p, \ell_q)^*}.
   \end{equation}
     It is well known that ${K}(\ell_p, \ell_q)$ is an $M$-ideal in ${B}(\ell_p, \ell_q)$ \cite{MR1238713}. Therefore, by Lemma \ref{extrememideal}, we have
$$\operatorname{ext}B_{{B}(\ell_p, \ell_q)^*} = \operatorname{ext}B_{{K}(\ell_p, \ell_q)^\perp}\cup \operatorname{ext}B_{{K}(\ell_p, \ell_q)^*}.$$
From the elementary theory of projective tensor products, we know that 
$${K}(\ell_p, \ell_q)^* = \ell_p\hat{\otimes}_\pi\ell_q^*.$$ 
Hence
$$\operatorname{ext}B_{{B}(\ell_p, \ell_q)^*} = \operatorname{ext}B_{{K}(\ell_p, \ell_q)^\perp}\cup \operatorname{ext}B_{\ell_p\hat{\otimes}_\pi\ell_q^*}.$$
From Equation (\ref{lkj}), we conclude that 
$$\operatorname{ext}J_{{B}(\ell_p, \ell_q)^*}(T)\subset\operatorname{ext}B_{{K}(\ell_p, \ell_q)^\perp}\cup \operatorname{ext}B_{\ell_p\hat{\otimes}_\pi\ell_q^*}.$$
Therefore, either $\varphi_n^k\in \operatorname{ext}B_{{K}(\ell_p, \ell_q)^\perp}$ or $\varphi_n^k\in \operatorname{ext}B_{\ell_p\hat{\otimes}_\pi\ell_q^*}$ for each $n$ and $k$. 
Define the set, 
$$L^k=\{n \in \mathbb{N}: \varphi_n^k\in \operatorname{ext}B_{{K}(\ell_p, \ell_q)^\perp}\}.$$

If $L_k=\{1, \ldots, m_k\}$ for all $k$, then 
$$z^k =\sum_{n\in L^k}t_n^k\varphi_n^k = \sum_{n=1}^{m_k}t_n^k\varphi_n^k\in B_{{K}(\ell_p, \ell_q)^\perp}$$ for all $k$. This would imply $u\in B_{{K}(\ell_p, \ell_q)^\perp}$, since ${K}(\ell_p, \ell_q)^\perp$ is weak* closed. But $u\in J_{\ell_p\hat{\otimes}_\pi \ell_q^*}(\tilde{T})$, forcing $u=0$, a contradiction. 

Therefore, for each $k$, there exists $\varphi_n^k\in \operatorname{ext}B_{\ell_p\hat{\otimes}_\pi\ell_q^*}$ for some $n\in\{1,\ldots, m_k\}$.
From Theorem \ref{extreruess}, we know that 
$$\operatorname{ext}B_{(\ell_p\hat{\otimes}_\pi \ell_q^*)} =\operatorname{ext}B_{{K}(\ell_p, \ell_q)^*} = \{x\otimes y: x\in S_{\ell_p}, y\in S_{\ell_q^*}\}.$$
Now, for each $n\notin L^k$, we have $\varphi_n^k\in \operatorname{ext}B_{\ell_p\hat{\otimes}_\pi\ell_q^*}$ and hence
\begin{equation}\label{unk}
    \varphi_n^k=  u_n^k \otimes v_n^{*k}, \mbox{ where } u_n^k\in S_{\ell_p}, v_n^{*k}\in S_{\ell_q^*}.
\end{equation}

Now, fix $\varphi_{n_0}^{k_0}\in \operatorname{ext}B_{\ell_p\hat{\otimes}_\pi\ell_q^*}\cap J_{{B}(\ell_p, \ell_q)^*}(T)\neq \emptyset.$ 
Since $\tilde{T}(\varphi_{n_0}^{k_0}) =1$, we have $v_{n_0}^{*k_0}(T u_{n_0}^{k_0}) =1$. Hence $\|Tu_{n_0}^{k_0}\|=1$ and thus $M_T\neq \emptyset$.  

 Let elements $x_0^*\in S_{{\ell_p}^*}$ and $y_0\in S_{\ell_q}$ be such that $x_0^*(x) =1$ and 
 $y^*(y_0)=1$, recall from the beginning that $y^*(Tx)=1-\delta$. Then $x_0^*\otimes y_0$ defines a compact operator from $\ell_p$ to $\ell_q$ or in other words $x_0^*\otimes y_0\in \ell_{p}^*\check{\otimes}_\varepsilon \ell_q$. Therefore, we can find $k$ such that, since $\lim_kx_0^*\otimes y_0(z^k) = 0,$ 
 $$\left|x_0^*\otimes y_0\left(u-\sum_{n\notin L^k} t_n^k u_n^k \otimes v_n^{*k} \right)\right|<{\delta_1-\|x\otimes y^*-u\|_\pi}.$$

 Thus for the above $k$, we have
\begin{equation}\label{impo}
\left|x_0^*\otimes y_0\left(x\otimes y^*-\sum_{n\notin L^k} t_n^k u_n^k \otimes v_n^{*k}\right)\right|<{\delta_1}.
\end{equation}
It remains to prove that $d(x, M_T)<\varepsilon$. 
If possible, suppose that $d(x, M_T)>\varepsilon$. 

Recall  that the elements $x_0^*\in S_{\ell_p^*}$ and $y_0\in S_{\ell_q}$ satisfies $x_0^*(x) =1$ and 
 $y^*(y_0)=1$. Then, by Remark \ref{slice}, we get 
 $$\operatorname{diam}S(x_0^*, \delta_1, B_{\ell_p})<\varepsilon.$$ 
 Since we are assuming that $d(x, M_T)>\varepsilon$ and $x\in S(x_0^*, \delta_1, B_{\ell_p})$, we can conclude that 
 $$y\notin S(x_0^*, \delta_1, B_X) \quad\mbox{ for all } y\in M_T. $$ 
 Therefore, by the definition of the slice. 
 $$\operatorname{Re}x_0^*(y)<1-\delta_1 \quad\mbox{ for all }y\in M_T. $$
Since $M_T = -M_T$, we have  
  \begin{equation}\label{slci}
      |\operatorname{Re}x_0^*(y)|<1-\delta_1 \quad\mbox{ for all } y\in M_T.
  \end{equation}
   We can assume, w. l. o. g., the elements  $u_n^k$ in Equation (\ref{unk}) satisfies $x_0^*(u_n^k)\in \mathbb{R}$. To see this, note that, if  $x_0^*(u_n^k) = r_n^ke^{i\theta_n^k}$, where $0\leq r_n^k\leq 1$ and $0\leq \theta_n^k<  2 \pi$, then take $\varphi_n^k = e^{-i\theta_n^k}u_n^k\otimes e^{i\theta_n^k}v_n^{*k}$. Also, note that $\sum_{n\notin L^k} t_n^k\leq 1$. 
  
 So, for the same $k$ above, since the elements  $u_n^k$ are in $M_T$ for all $n\notin L^k$, 
 \begin{align*}
     |\operatorname{Re}x_0^*\otimes y_0(u_n^k \otimes v_n^{*k})| & =
           |\operatorname{Re}x_0^*(u_n^k)v_n^{*k}(y_0)|\\
     & <   1-\delta_1.
 \end{align*}
 Therefore,
	\begin{align*}
 \left|\operatorname{Re}x_0^*\otimes y_0\left(x\otimes y^*-\sum_{n\notin L^k} t_n^k u_n^k \otimes v_n^{*k}\right)\right|
 &=\left|\operatorname{Re}x_0^*(x)y^*(y_0)- \sum_{n\notin L^k} t_n^k \operatorname{Re} x_0^*\otimes y_0(u_n^k \otimes v_n^{*k})\right|\\
		&\geq 1-\left| \sum_{n\notin L^k} t_n^k\operatorname{Re}x_0^*(u_n^k)v_n^{*k}(y_0)\right|\\
		&\geq 1-\sum_{n\notin L^k}t_n^k\left|\operatorname{Re}x_0^*(u_n^k)v_n^{*k}(y_0)\right|\\ 
		&\geq 1-\sum_{n\notin L^k}t_n^k(1-\delta_1)\\
		&\geq\delta_1.
	\end{align*}
 Which is a contradiction to the Inequality (\ref{impo}). Hence $d(x, M_T)<\varepsilon$.
\end{proof}
\begin{rmk}
    In the proof of Theorem \ref{main1}, we have utilized the uniform convexity and uniform smoothness of $\ell_p$ spaces. By employing analogous arguments to those used in the proof of Theorem \ref{main1}, it can be shown that if Banach spaces $X$ and $Y$ satisfies the following properties: $X$ is uniformly convex, $Y$ is uniformly smooth, ${K}(X, Y)$ is an $M$-ideal in ${B}(X, Y)$, $X^*$ has Radon-Nikodym property, and either $X^*$ or $Y^*$ has approximation property, then the condition $(ii)$ of Theorem \ref{main1} characterizes the strong subdifferentiability of the norm of ${B}(X, Y)$.
\end{rmk}
The following is an example of an operator at which the norm of ${B}(\ell_p)$ is strongly subdifferentiable. 
  \begin{eg}
     Let $U\in{B}(\ell_p)$, $1< p<\infty$, be an isometry. Then $M_U = S_{\ell_p}$. Therefore, by Theorem \ref{main1}, $U$ is an $SSD$ point of ${B}(\ell_p)$.  
     
 \end{eg}
 We will next show that for certain Banach spaces $X$ and $Y$, condition $(ii)$ of Theorem \ref{main1} is necessary for the operator norm on ${B}(X, Y)$ to be strongly subdifferentiable at a bounded linear operator $T\in S_{{B}(X, Y)}$. Consequently, we will address the borderline case $p=\infty$ in Corollary \ref{ssd3}.    

Observe that for all Banach spaces $Y$, the pair $(Y, \ell_1)$ has the property that every element $u$ of  $Y \hat{\otimes}_\pi \ell_1 $ has a representation $u= \sum_{n=1}^\infty\lambda_n u_n\otimes v_n$, where $\lambda_n \geq 0$, $\sum_{n=1}^\infty\lambda_n=\|u\|$, $\|u_n\|=\|v_n\|=1$ \cite{MR1888309}.

Motivated by the above property of $\ell_1$, we define the following for our current purpose.
\begin{defn}
    A pair of Banach spaces $(X, Y)$ is said to have property $(N)$ if the following hold.
    Whenever $f\in (X\hat{\otimes}_\pi Y)^*$ is an $SSD$ point and satisfies $f(u)=\|f\|$ for some $u\in B_{X\hat{\otimes}_\pi Y}$, then $u$ has the form,  $u= \sum_{i=1}^\infty\lambda_i u_i\otimes v_i$ with $\|u_i\|=1$, $\|v_i\|=1$, and $\sum_{i=1}^\infty\lambda_i =1$.
\end{defn}

Examples of Banach spaces satisfying the property $(N)$ include  $(Y, \ell_1)$ for every Banach space $Y$, the pair $(H, H)$ where $H$ is a Hilbert space, and the pair $(X, Y)$ for finite-dimensional Banach spaces $X$ and $Y$. Besides these examples, we do not know a pair of Banach spaces satisfying property $(N)$.  We refer the reader \cite{MR4371175}, where a stronger notion of property $(N)$ has been studied. 

We will next prove a result wherein the condition $(ii)$ of Theorem \ref{main1} is necessary for the operator norm to be strongly subdifferentiable. 
\begin{thm}\label{ssd1}
	Let $X$ and $Y$ be Banach spaces such that $X$ is uniformly convex and the pair $(X, Y^*)$ has property $(N)$. If the norm of ${B}(X, Y^{**})$ is strongly subdifferentiable at $T\in S_{{B}(X, Y^{**})}$, then 
	 $M_T\neq \emptyset$ and for every maximizing sequence $\{x_n\}$ of $T$, there exists a subsequence $\{x_{n_i}\}$ of $\{x_n\}$ such that $d(x_{n_i}, M_T)\rightarrow 0$.
\end{thm}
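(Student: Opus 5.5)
We identify ${B}(X, Y^{**})$ isometrically with $(X\hat{\otimes}_\pi Y^*)^*$ via $T\mapsto\tilde T$, $\tilde T(x\otimes y^*)=(Tx)(y^*)$. Strong subdifferentiability of the operator norm at $T$ is then strong subdifferentiability of the dual norm at $\tilde T$. This lets us apply Theorem \ref{indu} and Remark \ref{rmkindu}, so the proof follows the necessity half of Theorem \ref{main1}. The difference is that property $(N)$ now supplies a representation of the norming element $u$ directly, replacing the $M$-ideal/extreme point argument.

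\textbf{Nonemptiness of $M_T$.} By Theorem \ref{indu}, $J_{X\hat{\otimes}_\pi Y^*}(\tilde T)\neq\emptyset$. Pick $u$ in it. Since $\tilde T$ is an $SSD$ point with $\tilde T(u)=1=\|\tilde T\|$, property $(N)$ gives
\[
u=\sum_i\lambda_i u_i\otimes v_i,\qquad \|u_i\|=\|v_i\|=1,\quad \sum_i\lambda_i=1,\ \lambda_i\ge 0 .
\]
Then $1=\sum_i\lambda_i\operatorname{Re}(Tu_i)(v_i)$, and each term satisfies $\operatorname{Re}(Tu_i)(v_i)\le 1$. Hence $(Tu_i)(v_i)=1$ whenever $\lambda_i>0$. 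So $u_i\in M_T$ for those $i$, and in particular $M_T\neq\emptyset$.

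\textbf{Approximation by $M_T$.} Fix $\varepsilon>0$ and let $\delta_1$ be the modulus of convexity of $X$ at $\varepsilon$, so that slices $S(x_0^*,\delta_1,B_X)$ have diameter $<\varepsilon$ (Remark \ref{slice}). Take $\delta$ from Remark \ref{rmkindu}, so that $\tilde T(v)>1-\delta$ forces $d(v,J(\tilde T))<\delta_1$.

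Let $x\in S_X$ with $\|Tx\|>1-\delta$. By Goldstine and the norming property of $Y^*$ for $Y^{**}$, choose $y^*\in S_{Y^*}$ with $\operatorname{Re}(Tx)(y^*)>1-\delta$; after multiplying by a scalar we may assume the value is real. Then $\tilde T(x\otimes y^*)>1-\delta$, so there is $u\in J(\tilde T)$ with $\|x\otimes y^*-u\|_\pi<\delta_1$. Write $u=\sum\lambda_i u_i\otimes v_i$ as above; we may drop the terms with $\lambda_i=0$, so every $u_i\in M_T$.

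Choose $x_0^*\in S_{X^*}$ with $x_0^*(x)=1$ and $y_0\in S_{Y^{**}}$ with $y_0(y^*)=1$. Then $x_0^*\otimes y_0$ is a norm-one functional on $X\hat{\otimes}_\pi Y^*$, and
\[
|x_0^*\otimes y_0(x\otimes y^*-u)|<\delta_1 .
\]
Suppose $d(x,M_T)>\varepsilon$. Since $x$ lies in the slice $S(x_0^*,\delta_1,B_X)$, which has diameter $<\varepsilon$, no $y\in M_T$ lies in it. Using $\lambda M_T\subseteq M_T$ for scalars $|\lambda|=1$, we get $|x_0^*(y)|\le 1-\delta_1$ for all $y\in M_T$. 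Therefore
\[
|x_0^*\otimes y_0(u)|\le\sum_i\lambda_i\,|x_0^*(u_i)|\,|y_0(v_i)|\le 1-\delta_1,
\]
while $x_0^*\otimes y_0(x\otimes y^*)=1$. This gives $|x_0^*\otimes y_0(x\otimes y^*-u)|\ge\delta_1$, a contradiction. Hence $d(x,M_T)\le\varepsilon$, which is condition (ii) in the form of Remark \ref{rmkindu}.

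\textbf{Main obstacle.} The delicate step is passing between a general maximizing sequence and the functionals $y^*\in S_{Y^*}$. Norms in $Y^{**}$ are attained only approximately on $Y^*$, which is why we use a strict inequality with $\delta$ rather than exact attainment. The other point to handle carefully is the complex-scalar bound $|x_0^*(y)|\le 1-\delta_1$ on $M_T$, which needs the circle invariance of $M_T$. Everything else carries over from the proof of Theorem \ref{main1}, and is simpler, because property $(N)$ makes the series representation of $u$ exact.
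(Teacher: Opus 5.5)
Your proof is correct and takes essentially the paper's route. You pass to $(X\hat{\otimes}_\pi Y^*)^*$, use Theorem \ref{indu} to find $u\in J_{X\hat{\otimes}_\pi Y^*}(\tilde{T})$ close to $x\otimes y^*$, expand $u$ via property $(N)$, and reach a contradiction with the slice estimate by testing against $x_0^*\otimes y_0$, which is the injective-norm bound the paper uses. Your version is slightly more careful than the paper's in two places. You only use approximate norming of $Tx\in Y^{**}$ by $S_{Y^*}$, whereas the paper asserts exact attainment, which need not hold. You also use full circle invariance of $M_T$ to get $|x_0^*(y)|\le 1-\delta_1$ for complex scalars.
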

\begin{proof} 
	  Suppose the norm of ${B}(X, Y^{**})$ is strongly subdifferentiable at $T$. We have $(X\hat{\otimes}_\pi Y^*)^* \cong {B}(X, Y^{**})$. Hence the norm of $(X\hat{\otimes}_\pi Y^*)^*$  is strongly subdifferentiable at $\tilde{T}$ (see Expression (\ref{tilde}) in Theorem \ref{main1}).   
	Let $\varepsilon >0$ and $\delta_1$ be the modulus convexity of $X$ corresponding to $\varepsilon$. Using Theorem \ref{indu}, choose $\delta >0 $ such that whenever an element $u\in B_{X\hat{\otimes}_\pi Y^*}$ satisfies $\tilde{T}(u) >1-\delta $, we have $d(u,J_{X\hat{\otimes}_\pi Y^*}(T))< \delta_1$.
 
	Let $x\in S_X$ such that $1-\delta<\|Tx\|$. Then there exists a bounded linear functional $y^*\in S_{Y^*}$ such that $ y^*(Tx) = \|Tx\|$. 

	Since the element $x\otimes y^*\in B_{X\hat{\otimes}_\pi Y^*} $ satisfies $\tilde{T}(x\otimes y^*)>1-\delta $, there exists $u\in J_{X\hat{\otimes}_\pi Y^*}(T)$ such that $\|x\otimes y^*-u\|_\pi<\delta_1$. 
	
 By our assumption, the pair has $(X, Y^*)$ has the property $(N)$. Therefore, 
 we can write \begin{equation*}
     u =\sum_{i=1}^\infty\lambda_ix_i\otimes y_i^*, \mbox{ where } \sum_{i=1}^\infty\lambda_i =1, \quad x_i\in S_X,\mbox{ and } y_i^*\in S_{Y^*}.
 \end{equation*} 
	Since $\tilde{T}(u) = 1$, we get $\tilde{T}(x_i\otimes y_i^*) = 1$. Consequently, $\|T(x_i)\|= \|y_i^*\|= 1$ for all $i$ and thus $M_T\neq \emptyset$. Moreover,
	\begin{equation}\label{pi}
		\left\|x\otimes y^*-\sum_{i=1}^\infty\lambda_ix_i\otimes y_i^*\right\|_{X\hat{\otimes}_\pi Y^*}<\delta_1.
	\end{equation}
We will now prove that $d(x, M_T)<\varepsilon$. If possible, suppose that $d(x, M_T)>\varepsilon$. Let the element $x_0^*\in S_{X^*}$ be such that $x_0^*(x) = 1$. An argument,  as in the proof of Theorem \ref{main1}, using the slice of $B_X$ corresponding to the above $\delta>0$ and $x_0^*$ will obtain that
 \begin{equation}\label{del}
     \left\|x_0^*(x)y^*-\sum_{i=1}^\infty\lambda_{i}\operatorname{Re}x_0^*(x_i)y_i^*\right\|_{Y^*} > \delta_1.
 \end{equation}
 Since the projective tensor norm is larger than the injective tensor norm, we get, using Inequality (\ref{del})
 \begin{align*}
		 \left\|x\otimes y^*-\sum_{i=1}^\infty\lambda_ix_i\otimes y_i^*\right\|_{X\hat{\otimes}_\pi Y^*}&\geq \left\|x\otimes y^*-\sum_{i=1}^\infty\lambda_ix_i\otimes y_i^*\right\|_{X\check{\otimes}_\varepsilon Y^*}\\
       &=\left\| x\otimes y^*-\sum_{i=1}^\infty\lambda_ix_i\otimes y_i^*\right\|_{{B}(X^*, Y^*)}\\
		&\geq \left\|x_0^*(x)y^*-\sum_{i=1}^\infty\lambda_i\operatorname{Re}x_0^*(x_i)y_i^*\right\|_{Y^*} \\
		&> \delta_1.
	\end{align*}
 Which is a contradiction to Inequality (\ref{pi}). Hence  $d(x, M_T)<\varepsilon$.
 \end{proof}
In the following corollary, we will give a necessary condition for the operator norm to be strongly subdifferentiable in the borderline case $p=\infty$.
\begin{cor}\label{ssd3}
Let $X$ be a uniformly convex Banach space and $T\in {B}({X, \ell_\infty})$ such that $\|T\|=1$. If the norm of  ${B}({X, \ell_\infty})$ is strongly subdifferentiable at $T$, then $M_T\neq \emptyset$  and for every maximizing sequence $\{x_n\}$ of $T$, there exists a subsequence $\{x_{n_i}\}$ of $\{x_n\}$ such that $d(x_{n_i}, M_T)\rightarrow 0$.
\end{cor}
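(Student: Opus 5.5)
The plan is to reduce Corollary \ref{ssd3} to Theorem \ref{ssd1} by taking $Y=\ell_1$. Then $Y^*=\ell_\infty$ and $Y^{**}=\ell_\infty^*$, so $\ell_\infty$ does not appear directly as $Y^{**}$. The first step is therefore to realize ${B}(X,\ell_\infty)$ as a dual of a projective tensor product:
$$
{B}(X,\ell_\infty)={B}(X,\ell_1^*)\cong (X\hat{\otimes}_\pi \ell_1)^*,
$$
with $T\mapsto\tilde T$, $\tilde T(x\otimes y)=(Tx)(y)$ for $y\in\ell_1$. This is the same identification used in the proof of Theorem \ref{ssd1}, but with $Y^*$ replaced by $\ell_1$. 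Strong subdifferentiability of the norm of ${B}(X,\ell_\infty)$ at $T$ then becomes strong subdifferentiability of the dual norm of $(X\hat\otimes_\pi\ell_1)^*$ at $\tilde T$. Theorem \ref{indu} applies to this dual norm, with norm-attaining elements taken inside $B_{X\hat\otimes_\pi \ell_1}$.

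The second step is to rerun the proof of Theorem \ref{ssd1} verbatim, with the pair $(X,\ell_1)$ in place of $(X,Y^*)$. The required property $(N)$ holds for $(X,\ell_1)$: this is the example listed after the definition of property $(N)$. Every $u\in X\hat\otimes_\pi\ell_1$ has a representation $u=\sum\lambda_n u_n\otimes v_n$ with $\lambda_n\ge0$, $\sum\lambda_n=\|u\|$ and unit vectors $u_n,v_n$ \cite{MR1888309}. When $u\in J(\tilde T)$ we have $\|u\|=1$, so $\sum\lambda_n=1$.

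Concretely, fix $\varepsilon>0$ and let $\delta_1$ be the modulus of convexity of $X$ at $\varepsilon$. Choose $\delta$ from Remark \ref{rmkindu} for $\tilde T$.

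For $x\in S_X$ with $\|Tx\|>1-\delta$ we need a norming element from the predual side, since only elements of $B_{X\hat\otimes_\pi\ell_1}$ are allowed. Because $Tx\in\ell_\infty=\ell_1^*$, its norm is only approximately attained on $S_{\ell_1}$. So pick $y\in S_{\ell_1}$, after a scalar rotation, with $\operatorname{Re}(Tx)(y)>1-\delta$. Then $x\otimes y$ lies within $\delta_1$ of some $u=\sum\lambda_n x_n\otimes y_n\in J(\tilde T)$. From $\tilde T(u)=1$ we get $(Tx_n)(y_n)=1$ for every $n$, so each $x_n\in M_T$; in particular $M_T\ne\emptyset$.

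The slice argument is then as in Theorem \ref{main1}. Take $x_0^*\in S_{X^*}$ with $x_0^*(x)=1$. If $d(x,M_T)>\varepsilon$, then $|\operatorname{Re}x_0^*(x_n)|<1-\delta_1$ for all $n$, using Remark \ref{slice} and $M_T=-M_T$ (after rotating phases to make $x_0^*(x_n)$ real). Now pass to the injective norm and evaluate at $x_0^*$ to get
$$
\|x\otimes y-u\|_\pi\ge\Bigl\|y-\sum\lambda_n x_0^*(x_n)y_n\Bigr\|_{\ell_1}.
$$
To bound the right side from below, test against the functional $Tx\in S_{\ell_\infty}$. This gives $\operatorname{Re}(Tx)(y)>1-\delta$, while each term satisfies $|x_0^*(x_n)(Tx)(y_n)|<1-\delta_1$. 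Hence the right side is at least $\delta_1-\delta$, and we need this to exceed the distance bound.

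The main obstacle is exactly this point: in Theorem \ref{ssd1} the norming functional gave the exact value $1$, whereas here $Tx$ is not norm-attaining on $\ell_1$, so there is a loss of $\delta$. The fix I would try first is to apply Remark \ref{rmkindu} with tolerance $\delta_1/2$ instead of $\delta_1$, and to shrink $\delta$ so that $\delta<\delta_1/2$. Then $\|x\otimes y-u\|_\pi<\delta_1/2<\delta_1-\delta$, which is a contradiction, so $d(x,M_T)\le\varepsilon$. This gives the uniform version from Remark \ref{rmkindu}, and the subsequence formulation for maximizing sequences follows immediately.
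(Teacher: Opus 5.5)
Your argument is correct and is essentially the paper's proof written out: the paper takes $Y=c_0$ instead of $Y=\ell_1$, so that ${B}(X,\ell_\infty)={B}(X,c_0^{**})$ with relevant pair $(X,c_0^*)=(X,\ell_1)$, which has property $(N)$, and it then simply cites Theorem \ref{ssd1}. Rerunning the argument is still worthwhile, because the proof of Theorem \ref{ssd1} picks $y^*\in S_{Y^*}$ with $y^*(Tx)=\|Tx\|$, and for $Tx\in\ell_\infty$ such a $y^*$ need not exist when $\sup_k|(Tx)_k|$ is not attained; your approximate choice of $y\in S_{\ell_1}$ repairs exactly that step, and the resulting $\delta$-loss can be avoided altogether by testing against a norming element of $y$ in $S_{\ell_\infty}$ instead of against $Tx$.
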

\begin{proof}
We have ${B}({X, \ell_\infty}) = {B}(X, c_0^{**})$. Since the pair $(X, c_0^*)=(X, \ell_1)$ has property $(N)$, the result follows from Theorem \ref{ssd1}. 
\end{proof}
We will next consider the other extreme value $p=1$. The strong subdifferentiability of the operator norm $\ell_1$ can be deduced from \cite[Theorem 2.5]{MR1216708} and Theorem \ref{indu}, for completeness, we will recall it here. 
\begin{prop} 
Let $T\in {B(\ell_1)}$ be an operator such that $\|T\|=1$ and $\{e_j\}$ denotes the canonical basis of $\ell_1$. Then $T$ is an $SSD$ point if and only if the following two conditions hold.
\begin{enumerate}
    \item[$(i)$] The set, $\{j\in \mathbb{N} : \|Te_j\| =\|T\|\}\neq \emptyset$ and $\sup \{\|Te_j\| : \|Te_j\| <1\}<1.$
    \item[$(ii)$] $Te_j\in c_{00}$ whenever $\|Te_j\|=1$.
\end{enumerate}
\end{prop}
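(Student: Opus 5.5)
The plan is to realize $B(\ell_1)$ as a dual space and apply Theorem \ref{indu}. Since $\ell_1=c_0^*$, we have
$$B(\ell_1)=B(\ell_1,c_0^{*})\cong(\ell_1\hat{\otimes}_\pi c_0)^*\cong \ell_1(c_0)^*\cong\ell_\infty(\ell_1).$$
Under this identification $T$ corresponds to the sequence $(Te_j)_j$, with $\|T\|=\sup_j\|Te_j\|_1$. It acts on $u=(u_j)_j\in\ell_1(c_0)$ by
$$\tilde T(u)=\sum_j\langle Te_j,u_j\rangle .$$
So $T$ is an $SSD$ point if and only if $\tilde T$ satisfies condition $(ii)$ of Theorem \ref{indu} (in the uniform form of Remark \ref{rmkindu}) on $B_{\ell_1(c_0)}$.

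The first step is to compute $J_{\ell_1(c_0)}(\tilde T)$. Since $\sum_j\|u_j\|\le1$ and $|\langle Te_j,u_j\rangle|\le\|Te_j\|\,\|u_j\|$, equality $\tilde T(u)=1$ holds exactly when three things happen. First, $u_j=0$ unless $j\in A:=\{j:\|Te_j\|=1\}$. Second, $\sum_j\|u_j\|=1$. Third, each nonzero $u_j/\|u_j\|$ lies in $J_{c_0}(Te_j)$. Now $J_{c_0}(y)\neq\emptyset$ for $y\in S_{\ell_1}$ precisely when $y\in c_{00}$: a norming $x\in B_{c_0}$ must satisfy $x_i=\overline{\operatorname{sgn}}\,y_i$ on $\operatorname{supp}y$, and this is a $c_0$-vector only if the support is finite. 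Hence $J\neq\emptyset$ if and only if some $j\in A$ has $Te_j\in c_{00}$. Moreover, every element of $J$ is supported on the set $A_0=\{j\in A: Te_j\in c_{00}\}$.

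For necessity, take the elements $\xi_j\otimes x$ with $\xi_j$ the $j$-th unit vector and $x\in S_{c_0}$ a near-norming vector for $Te_j$. Any $v\in J$ has zero $j$-th coordinate when $j\notin A_0$, so $\|\xi_j\otimes x-v\|\ge\|x\|=1$ for such $j$.
\begin{itemize}
\item If $\sup\{\|Te_j\|:\|Te_j\|<1\}=1$, choosing such $j$ gives near-maximizers at distance $1$ from $J$, which violates Theorem \ref{indu}.
\item If some $j\in A$ has $Te_j\notin c_{00}$, the same choice at that fixed $j$, with $x$ nearly norming $Te_j$, again violates Theorem \ref{indu}.
\end{itemize}
Together with $J\neq\emptyset$, this gives $(i)$ and $(ii)$.

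For sufficiency, let $u\in B_{\ell_1(c_0)}$ with $\tilde T(u)>1-\delta$. Set $\gamma:=1-\sup\{\|Te_j\|:\|Te_j\|<1\}$. Then
$$1-\delta<\sum_j\|Te_j\|\,\|u_j\|\le \sum_{j\in A}\|u_j\|+(1-\gamma)\sum_{j\notin A}\|u_j\|,$$
so the mass $\sum_{j\notin A}\|u_j\|$ is less than $\delta/\gamma$, and discarding it costs little. On each $j\in A$, condition $(ii)$ says $Te_j\in c_{00}$, and we then replace $u_j$ by the vector that equals $\|u_j\|\,\overline{\operatorname{sgn}}(Te_j)_i$ on $\operatorname{supp}Te_j$ and agrees with $u_j$ elsewhere (truncated to the unit ball if needed). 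The main obstacle is to control this replacement uniformly over the possibly infinitely many $j\in A$. The total defect $\sum_{j\in A}\big(\|u_j\|-\operatorname{Re}\langle Te_j,u_j\rangle\big)$ is small, but it forces the coordinates $u_{j,i}$ close to the signs only where $|(Te_j)_i|$ is not too small. I would try to get this control from \cite[Theorem 2.5]{MR1216708}, which describes $SSD$ points of $\ell_1$-type dual norms via such gap conditions, applied coordinatewise. Failing that, I would use the subsequence form of Theorem \ref{indu}: pass to a subsequence along which the mass concentrates on finitely many $j\in A$ up to a small error, where the finitely many finite supports give a uniform lower bound on $|(Te_j)_i|$. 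If even this fails, I would check whether the stated conditions need a uniform lower bound on the nonzero entries $|(Te_j)_i|$, $j\in A$, in order to be sufficient.
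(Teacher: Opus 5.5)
\textbf{The sufficiency half cannot be completed, because the ``if'' direction is false as stated. Your closing suspicion is correct.}

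Your necessity half is correct and self-contained. In the model $B(\ell_1)\cong\ell_1(c_0)^*$, every element of $J_{\ell_1(c_0)}(\tilde T)$ is supported on $\{j:\|Te_j\|=1,\ Te_j\in c_{00}\}$, and the test vectors $\xi_j\otimes x$ violate Theorem~\ref{indu} exactly as you say. This differs from the paper's route, which writes $B(\ell_1)=(\bigoplus_n\ell_1)_\infty$ and quotes \cite[Theorem 2.5]{MR1216708} for $\ell_\infty$-sums.

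\textbf{Counterexample.} Let $Te_1=0$ and $Te_j=(1-\frac1j)e_1+\frac1je_2$ for $j\ge2$. Then $\|T\|=1$, $\{j:\|Te_j\|=1\}=\{2,3,\dots\}$, $\sup\{\|Te_j\|:\|Te_j\|<1\}=0$, and every $Te_j\in c_{00}$, so (i) and (ii) hold. Take $v\in J_{\ell_1(c_0)}(\tilde T)$ and set $s=\|v_k\|_\infty$. Then $\langle Te_k,v_k\rangle=s$ forces $v_{k,1}=v_{k,2}=s$. Hence for $u^{(k)}=\xi_k\otimes e_1$ (with $e_1\in c_0$)
\[
\tilde T(u^{(k)})=1-\tfrac1k\to1,\qquad \|u^{(k)}-v\|\ge\max(1-s,\,s)+(1-s)\ge1 .
\]
So no subsequence approaches $J$, and $T$ is not an $SSD$ point. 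One can also see this directly: with $S_ke_k=\frac12(e_1-e_2)$ and $S_ke_j=0$ for $j\ne k$, one gets $\|T+tS_k\|=\max(1,1+t-\frac2k)$. The one-sided derivative in direction $S_k$ is therefore $0$, while the difference quotient at $t=\frac4k$ is $\frac12$.

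Both of your fallbacks fail on this example. The subsequence fallback fails on precisely this sequence, since all the mass sits on one moving index. The coordinatewise use of \cite[Theorem 2.5]{MR1216708} fails as well: each $Te_j$ is individually $SSD$ in $\ell_1$, but not uniformly in $j$.

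\textbf{The paper's proof breaks at the same place.} It quotes \cite[Theorem 2.5]{MR1216708} in the form ``every norm-one coordinate is $SSD$, and $(\|Te_j\|)_j$ is $SSD$ in $\ell_\infty$''. That form omits the uniformity needed when infinitely many coordinates have norm one. The $\ell_\infty$-sum $(Te_j)_j$ above satisfies both conditions without being $SSD$.

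\textbf{Correct characterization.} Add the condition you guessed:
\[
\eta:=\inf\{|(Te_j)_i|:\ \|Te_j\|=1,\ (Te_j)_i\neq0\}>0 .
\]
This is automatic when only finitely many columns have norm one. It is necessary by the same kind of test vector: put $0$ at a small entry and conjugate signs on the rest of the support.

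With this condition your sketch closes. For $\|Te_j\|=1$ put $d_j=\|u_j\|-\operatorname{Re}\langle Te_j,u_j\rangle\ge0$. Then $\sum_jd_j<\delta$. Every entry on $\operatorname{supp}Te_j$ satisfies
\[
\bigl|u_{j,i}-\|u_j\|\,\overline{\operatorname{sgn}\,(Te_j)_i}\bigr|^2\le 2\|u_j\|d_j/\eta .
\]
Cauchy--Schwarz then bounds the total cost of your replacement by $\sqrt{2\delta/\eta}$. No truncation is needed, since the new vector has the same sup-norm. Finally, discard the off-$A$ mass (which is $<\delta/\gamma$) and renormalize; this gives a point of $J$ nearby.
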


\begin{proof}
We have $B(\ell_1) = \left(\bigoplus_{n=1}^\infty \ell_1\right)_{\infty}$ under the isometric identification $T \mapsto (Te_1, Te_2, \ldots)$ with $\|T\| =\sup_j\{\|Te_j\|\}$.

Therefore, $T$ is an $SSD$ point $B(\ell_1)$ if and only if $(Te_1, Te_2, \ldots)$ is an $SSD$ point of $\left(\bigoplus_{n=1}^\infty \ell_1\right)_{\infty}$.

From \cite[Theorem 2.5]{MR1216708}, the element $(Te_1, Te_2, \ldots)$  is an $SSD$ point of $\left(\bigoplus_{n=1}^\infty \ell_1\right)_{\infty}$ if and only if $Te_j$ is an $SSD$ point of $\ell_1$ for all $j$ satisfying $\|Te_j\| = 1$ and the element $(\|Te_j\|)\in \ell_\infty$ is an $SSD$ point of $\ell_\infty$. 

Since  $\ell_1 =c_0^*$, it can be easily verified using Theorem \ref{indu} that $Te_j$ is an $SSD$ point of $\ell_1$ if and only if $Te_j\in c_{00}$. Since $\ell_\infty =\left(\oplus_{n=1}^\infty \mathbb{F}\right)_\infty$, using  \cite[Theorem 2.5]{MR1216708} again, we get  that $(\|Te_j\|)\in \ell_\infty$ is an $SSD$ point of $\ell_\infty$ if and only if $\{j\in \mathbb{N} : \|Te_j\| =1\}\neq \emptyset$ and $\sup \{\|Te_j\| : \|Te_j\| <1\}<1.$ 
\end{proof}
We will next present a proposition that may be of independent significance. 
 \begin{prop}
    Let $X$ be a uniformly convex Banach space and $T\in S_{{B}(X, Y)}$ for some Banach space $Y$. Let the sequence $\{x_n\}$ be a maximizing sequence of $T$. Then $\liminf d(x_n, {\conv}M_T)=0$ if and only if $\liminf d(x_n, M_T)=0$.
\end{prop}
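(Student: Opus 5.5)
One implication is immediate. Since $M_T\subseteq \conv M_T$, we have $d(x_n,\conv M_T)\le d(x_n,M_T)$ for every $n$. Hence $\liminf d(x_n,M_T)=0$ forces $\liminf d(x_n,\conv M_T)=0$.

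For the converse, assume $\liminf d(x_n,\conv M_T)=0$ and pass to a subsequence, still denoted $\{x_n\}$, along which $d(x_n,\conv M_T)\to 0$. Choose $c_n\in \conv M_T$ with $\|x_n-c_n\|\to 0$, and write each $c_n$ as a finite convex combination $c_n=\sum_{i=1}^{m_n}t_i^n y_i^n$ with $y_i^n\in M_T$. The aim is to show that most of the convex weight of $c_n$ sits on points $y_i^n$ close to $x_n$; uniform convexity, in the form of Remark \ref{slice}, should deliver this.

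Fix $\varepsilon>0$, let $\delta_1=\delta(\varepsilon)$ be the modulus of convexity, and take $x_n^*\in S_{X^*}$ with $x_n^*(x_n)=1$. Then
\[
\operatorname{Re}x_n^*(c_n)=\sum_i t_i^n \operatorname{Re}x_n^*(y_i^n)\ge 1-\|x_n-c_n\|\to 1 .
\]
Every $y_i^n$ with $\operatorname{Re}x_n^*(y_i^n)>1-\delta_1$ lies in the slice $S(x_n^*,\delta_1,B_X)$, which also contains $x_n$. By Remark \ref{slice} that slice has diameter $<\varepsilon$, so $\|x_n-y_i^n\|<\varepsilon$ for such indices. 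Let $G_n$ be the set of these indices and $\beta_n=\sum_{i\notin G_n}t_i^n$. Since $\operatorname{Re}x_n^*(y_i^n)\le 1$ on $G_n$ and $\le 1-\delta_1$ off $G_n$,
\[
1-\beta_n\delta_1\ \ge\ \operatorname{Re}x_n^*(c_n)\ \ge\ 1-\|x_n-c_n\| ,
\]
so $\beta_n\le \|x_n-c_n\|/\delta_1\to 0$. In particular $G_n\neq\emptyset$ for large $n$, and picking any $i\in G_n$ gives $d(x_n,M_T)\le\|x_n-y_i^n\|<\varepsilon$. As $\varepsilon$ was arbitrary, a diagonal argument yields $\liminf d(x_n,M_T)=0$.

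This argument uses only that $x_n\in S_X$, not that the sequence is maximizing. The maximizing hypothesis may simply be contextual, or it may be used elsewhere, for example to guarantee $M_T\neq\emptyset$ implicitly. The only delicate step is the weight-counting estimate above. Note also that it does not require $\|c_n\|=1$; the bound $\operatorname{Re}x_n^*(c_n)\to 1$ already suffices.
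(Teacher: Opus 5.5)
Your proof is correct and uses the same mechanism as the paper's: a norming functional tested on the convex combination forces some summand from $M_T$ into a slice of $B_X$, and uniform convexity (Remark \ref{slice}) makes that slice small. The paper differs in two ways: it argues by contradiction, and it takes the norming functional at the convex combination $y_n$ (your $c_n$), which needs $\|y_n\|\to 1$ and a second, unstated use of uniform convexity to pass from $\operatorname{Re} f_{y_n}(x^n_{m_n})\to 1$ to $\|x^n_{m_n}-y_n\|\to 0$; your functional at $x_n$ reaches the conclusion in one step (in fact $d(x_n,M_T)\to 0$ along the subsequence with no diagonal argument, and neither proof uses more than $x_n\in S_X$, not the maximizing property).
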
 
 \begin{proof}
 We will prove that if $\liminf d(x_n, {\conv}M_T)=0$, then $\liminf d(x_n, M_T)=0$. The other implication is straightforward.

 Suppose that  $\liminf d(x_n, {\conv}M_T)=0$, passing onto a subsequence if necessary, we assume that $\lim\|x_n-y_n\|=0$ for some sequence $\{y_n\}$ in ${\conv}M_T$. 

    For each $n$, the element $y_n$ can be written as $y_n = \sum_{m=1}^{k_n}\lambda_{m}^{n}x_{m}^n$, where $\sum_{m=1}^{k_n}\lambda_{m}^n=1$
    and $x_{m}^n\in M_T$.
    
    If $\lim d(y_{n_i}, M_T)\neq0$ for every subsequence  $\{y_{n_i}\}$ of $\{y_n\}$, then there exists $\varepsilon> 0$ such that $\|y_n-x\|>\varepsilon$ for all $x\in M_T$. 
    Denote by $f_x$, the bounded linear functional on $X^*$ such that $f_x(x) =\|x\|$ for each $x\in S_X$.  
    Let $\delta>0$ be the modulus of convexity of $X$ corresponding to $\varepsilon$. 
     Then, for each $n$ and $x\in M_T$, we have $y_n\notin S(f_x,\delta, B_X)$, the slice of $B_X$ corresponding to $f_x$ and $\delta$. 
   Therefore, 
   \begin{equation}\label{eqqq}
    \operatorname{Re}f_x(y_n)<1-\delta \mbox{ for all } n \mbox{ and } x\in M_T.   \end{equation}
   Since $\{\|y_n\|\}$ converges to $1$, we get that $\{f_{y_n}(y_n)\}$ converges to $1$.
    Hence we can assume that $f_{y_n}(y_n)>1-\varepsilon_n$ for each $n$ and for some sequence $(\varepsilon_n)$ converging to $0$. Since  $y_n = \sum_{m=1}^{k_n}\lambda_{m}^nx_{m}^n$,
    it follows that $\sum_{m=1}^{k_n}\lambda_{m}^nf_{y_n}(x_{m}^n)>1-\varepsilon_n$. 
    So there exists $m_n$ corresponding to each $n$ such that $f_{y_n}(x_{m_n}^n)>1-\varepsilon_n$. 
    Therefore, the sequence $\{f_{y_n}(x_{m_n}^n)\}$ converges to $1$ as $n$ tends to $\infty$.
    
 But $|f_{{x_{m_n}^n}}((x_{m_n}^n)- f_{{x_{m_n}^n}}((y_{m_n}^n)| \leq \|x_n-y_n\|\rightarrow 0$.  
   
   Hence $f_{{x_{m_n}^n}}(y_n)\rightarrow 1$, which is a contradiction to inequality (\ref{eqqq}). Therefore, there exits a subsequence $\{y_{n_i}$ of $\{y_n\}$ such that $\lim_id(y_{n_i}, M_T)=0$. Consequently, $\lim_id(x_{n_i}, M_T)=0$.
\end{proof}


\section{Denseness of strongly subdifferentiable points of ${B}(\ell_p, \ell_q)$}

   We know that the set of all $SSD$ points in ${B}(H)$ is dense in ${B}(H)$ \cite{MR1373537}. In this section, we will extend this result to the operator norm on $\ell_p$ spaces. Additionally, we will show that the set of $SSD$ points of ${B}(\ell_p, \ell_q)$ contains an important class of operators, namely, those operators with essential norm strictly less than its operator norm. Towards the end of this section, we prove that the Fr\'{e}chet and Gateaux differentiability coincide in ${B}(\ell_p, \ell_q)$. We will also show that the assumption made by F. Kittaneh and R. Younis in \cite{MR1073855} regarding the smoothness of operator norm characterizes Fr\'{e}chet differentiability in ${B}(\ell_p, \ell_q)$.
   
     For $1<p,q<\infty$, let $\operatorname{SSD}\{{B}(\ell_p, \ell_q)\}$ denotes the set of all operators in ${B}(\ell_p, \ell_q)$ at which the norm of ${B}(\ell_p, \ell_q)$ is strongly subdifferentiable. 

     We will next provide a theorem which has very interesting consequences.
\begin{thm}\label{den}
The following inclusion holds for $1< p, q<\infty$.
\begin{equation}\label{inclu}
    \left\{T \in {B}(\ell_p, \ell_q): \|T\|_e<\|T\|\right\}\subset \operatorname{SSD}\{{B}(\ell_p, \ell_q)\}.
\end{equation}
In particular, $\operatorname{SSD}\{{B}(\ell_p, \ell_q)\}$ is a dense subset of ${B}(\ell_p, \ell_q)$.
\end{thm}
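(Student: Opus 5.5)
By normalizing we may assume $\|T\|=1$ and $\|T\|_e<1$. The plan is to verify condition $(ii)$ of Theorem \ref{main1}; the implication $(ii)\Rightarrow(i)$ of that theorem (equivalently Proposition \ref{uni}, since $\ell_q$ is uniformly smooth) then gives that $T$ is an $SSD$ point.

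Choose a compact $K$ with $\|T-K\|<r<1$. Let $\{x_n\}$ be a maximizing sequence for $T$. By reflexivity of $\ell_p$ we pass to a subsequence with $x_n\rightharpoonup x$ weakly. Since $K$ is compact, $Kx_n\to Kx$ in norm.

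\textbf{Step 1: a splitting estimate.} Write $x_n=x+w_n$ with $w_n\rightharpoonup 0$. Then $Tx_n=Tx+Tw_n$, and $\|Tw_n\|\le \|(T-K)w_n\|+\|Kw_n\|$, where $\|Kw_n\|\to 0$. In $\ell_p$ and $\ell_q$, weakly null sequences are asymptotically disjoint from fixed vectors after passing to a subsequence. This gives
$$\|x_n\|^p=\|x\|^p+\|w_n\|^p+o(1)\quad\text{and}\quad \|Tx_n\|^q\le \|Tx\|^q+\|Tw_n\|^q+o(1).$$
The second relation needs care because $Tw_n$ is not exactly disjoint from $Tx$. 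However, $Tw_n\rightharpoonup 0$ in $\ell_q$, and after a gliding hump the $\ell_q$-norm splits asymptotically.

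\textbf{Step 2: forcing $w_n\to 0$.} Put $a=\|x\|$ and $b=\limsup\|w_n\|$, so that $a^p+b^p=1$. From Step 1,
$$1\le (a^q+r^qb^q)^{1/q}.$$
If $p\le q$, then $a^q+b^q\le (a^p+b^p)^{q/p}=1$, so the factor $r<1$ forces $b=0$. If $p>q$ the comparison of exponents fails. This is the main obstacle. In that case I would first try the alternative of passing to a subsequence so that $\|w_n\|\to b$ and arguing instead that $T$ restricted to the far tail has norm at most about $r$. I would also check whether the Pitt-type behaviour of $B(\ell_p,\ell_q)$ for $p>q$ (all operators are compact, so $\|T\|_e=0$) simply makes the case trivial by compactness. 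Indeed, for $p>q$ every operator is compact, so $Tw_n\to 0$ and the inequality $1\le a$ follows directly. Hence in all cases $b=0$, $x_n\to x$ in norm, and $\|Tx\|=1=\|x\|$, so $x\in M_T$.

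\textbf{Step 3: conclusion.} Every maximizing sequence therefore has a subsequence converging in norm to an element of $M_T$. In particular $M_T\neq\emptyset$ and $d(x_{n_i},M_T)\to0$, which is condition $(ii)$ of Theorem \ref{main1}. For density, given any $S$ with $S\neq 0$, take a norming vector $x_0$ with $\|Sx_0\|\approx\|S\|$ and perturb by a rank-one operator $\varepsilon\, x_0^*\otimes Sx_0/\|Sx_0\|$, where $x_0^*(x_0)=1$. This raises the norm strictly above $\|S\|$ while leaving the essential norm unchanged, since $\|S+F\|_e=\|S\|_e\le\|S\|<\|S+F\|$. The perturbed operator therefore lies in the set on the left of (\ref{inclu}). (The case $S=0$ is handled by a rank-one operator directly.)
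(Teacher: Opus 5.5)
Your argument is correct. For the inclusion it follows the paper's route: reduce to condition $(ii)$ of Theorem \ref{main1}, take a weak limit $x$ of a maximizing sequence, use the asymptotic $\ell_p$/$\ell_q$ splitting (the paper's identity (\ref{1}) for $(M_p)$-spaces), and compare exponents when $p\le q$. When $p>q$ you invoke Pitt's theorem, $K(\ell_p,\ell_q)=B(\ell_p,\ell_q)$. That is exactly how Lemma \ref{wmp} disposes of the case $q<p$, so your hesitation there is unnecessary.

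There are two small streamlinings in the inclusion.
\begin{itemize}
\item The paper uses the $M$-ideal formula of Lemma \ref{esse} to get $\limsup\|Ty_n\|\le\|T\|_e$ for normalized weakly null $(y_n)$. You instead fix a compact $K$ with $\|T-K\|<r<1$ and use that $K$ maps weakly null sequences to norm null ones, which is more elementary.
\item Your single estimate $1\le a^q+r^qb^q\le 1-(1-r^q)b^q$ treats $x=0$ and $x\neq 0$ at once. It replaces the paper's separate exclusion of $x=0$ and its two-stage argument in Lemma \ref{wmp}: first $\|Tx\|=\|x\|$, then a contradiction via $(x_{n_k}-x)/\|x_{n_k}-x\|$.
\end{itemize}

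The genuinely different part is density. The paper cites the nowhere density of $\{T:\|T\|=\|T\|_e\}$ from Harmand--Werner--Werner (Lemma \ref{nowhere}). Your rank-one perturbation $S+\varepsilon\,x_0^*\otimes Sx_0/\|Sx_0\|$ is self-contained and works in $B(X,Y)$ for arbitrary Banach spaces $X,Y$. You should state $\|x_0^*\|=1$, so that the perturbation has norm $\varepsilon$, and take $\|Sx_0\|>\|S\|-\varepsilon$. Moreover, $\{T:\|T\|_e<\|T\|\}$ is open because $T\mapsto\|T\|-\|T\|_e$ is continuous. So your density argument actually recovers Lemma \ref{nowhere} rather than needing it.
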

To establish Theorem \ref{den}, we have to integrate Theorem \ref{main1} with a few findings from \cite{MR1238713} and Lemma \ref{wmp}.
We will first recall the required results.
\begin{lem}\cite[Proposition 4.8, Proof of (b)]{MR1238713}\label{nowhere}
    Let $1<p,q<\infty$. Then the set ${K}^\theta(\ell_p, \ell_q)=\left\{T \in {B}(\ell_p, \ell_q): \| T\|=\| T \|_e\right\}$ is nowhere dense in ${B}(\ell_p, \ell_q)$.
\end{lem}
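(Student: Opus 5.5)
The plan is to show two things: the set ${K}^\theta(\ell_p, \ell_q)$ is norm closed, and it has empty interior. A closed set with empty interior is nowhere dense.

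\emph{Closedness.} Both $T\mapsto\|T\|$ and $T\mapsto\|T\|_e=d(T,{K}(\ell_p,\ell_q))$ are $1$-Lipschitz on ${B}(\ell_p,\ell_q)$. Hence $T\mapsto \|T\|-\|T\|_e$ is continuous, and ${K}^\theta(\ell_p,\ell_q)$, the preimage of $\{0\}$, is closed.

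\emph{Empty interior.} Fix $T\in {K}^\theta(\ell_p,\ell_q)$ and $\varepsilon>0$; we look for $S$ with $\|S-T\|\le\varepsilon$ and $\|S\|_e<\|S\|$.
\begin{enumerate}
\item[(a)] If $T=0$, take any nonzero compact $S$ of norm $\varepsilon$; then $\|S\|_e=0<\|S\|$.
\item[(b)] Otherwise scale so that $\|T\|=\|T\|_e=1$, and put $\eta=\min\{\varepsilon/2,1/2\}$. Choose $x\in S_{\ell_p}$ with $\|Tx\|>1-\eta/2$; then $Tx\neq 0$. Let $y=Tx/\|Tx\|\in S_{\ell_q}$, and let $x^*\in S_{\ell_p^*}$ satisfy $x^*(x)=1$.
\item[(c)] Define the rank-one perturbation
$$S=T+\eta\, x^*\otimes y,\qquad \text{that is,}\qquad Sz=Tz+\eta\, x^*(z)\,y .$$
Then $\|S-T\|=\eta\le\varepsilon$.
\item[(d)] Since $x^*\otimes y$ is compact, $\|S\|_e=\|T\|_e=1$.
\item[(e)] On the other hand, $Sx=Tx+\eta y=(\|Tx\|+\eta)\,y$. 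Therefore
$$\|S\|\ge \|Sx\|=\|Tx\|+\eta>1-\eta/2+\eta>1=\|S\|_e .$$
\end{enumerate}
So $S\notin {K}^\theta(\ell_p,\ell_q)$, and every neighbourhood of every point of ${K}^\theta(\ell_p,\ell_q)$ meets the complement.

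Where the argument could slip: in step (e) the perturbation must push $Tx$ in its own direction, so that the norm gain at $x$ is exactly $\eta$ rather than only bounded below by $\eta$ minus an error. The choice $y=Tx/\|Tx\|$ together with $x^*(x)=1$ is what makes $Sx$ a positive multiple of $Tx$. Apart from the existence of norming functionals, nothing specific to $\ell_p$ is used, so the argument in fact works in any ${B}(X,Y)$.
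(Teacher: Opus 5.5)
Your proof is correct and complete; the rescaling in (b) is harmless because ${K}^\theta$ is invariant under nonzero scalar multiples.

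The paper gives no argument of its own for this lemma. It only cites the proof of part (b) of Proposition 4.8 in the $M$-ideal monograph of Harmand--Werner--Werner, so your write-up is a self-contained replacement for that citation rather than a variant of something in the text.

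Your route makes clear that the lemma is entirely soft:
\begin{itemize}
\item closedness comes from the $1$-Lipschitz maps $T\mapsto\|T\|$ and $T\mapsto\|T\|_e$;
\item empty interior comes from the perturbation $T+\eta\,x^*\otimes Tx/\|Tx\|$, which leaves the essential norm unchanged while raising $\|Sx\|$ to exactly $\|Tx\|+\eta$.
\end{itemize}
Neither the $\ell_p$ geometry nor the $M$-ideal property of ${K}(\ell_p,\ell_q)$ is used. Hence ${K}^\theta(X,Y)$ is nowhere dense in ${B}(X,Y)$ for all nonzero Banach spaces $X,Y$.

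What does the work is the presence of rank-one operators inside the ideal, not $M$-ideal structure as such. For instance, $J=\{0\}\oplus\mathbb{R}$ is an $M$-summand of $\ell_\infty^2$, yet $\{x:\|x\|=d(x,J)\}$ has nonempty interior.

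Within the paper, this gives a clean division of labour. The open dense set $\{T:\|T\|_e<\|T\|\}$ comes for free, and all the $\ell_p$-specific input (Lemma~\ref{wmp} and Theorem~\ref{main1}) is needed only to show that this set lies inside the strongly subdifferentiable points in Theorem~\ref{den}. The citation, by contrast, buys brevity and keeps the lemma inside the $M$-ideal framework the paper relies on elsewhere.
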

We will next recall a formula for computing the essential norm of an operator. 
\begin{lem}\cite[Lemma 5]{MR1155717}\label{esse}
Suppose that ${K}(X, Y)$ is an $M$-ideal in ${B}(X, Y)$ and an operator $T\in {B}(X, Y)$. Then
$$
\|T\|_e=\max \left\{w(T), w^*(T)\right\},
$$
where 
$$
w(T):=\sup \left\{\limsup _\alpha\left\|T x_\alpha\right\| :\left\|x_\alpha\right\|=1, x_\alpha \xrightarrow{w} 0\right\}
$$
and, in the same way,
$$
w^*(T):=\sup \left\{\limsup _\alpha\left\|T^* x_\alpha\right\|: \| x_\alpha \|=1, x_\alpha \xrightarrow{w^*} 0\right\}
$$
and at least one of the involved suprema is achieved.
\end{lem}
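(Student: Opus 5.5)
The proof splits into the two inequalities $\|T\|_e\geq\max\{w(T),w^*(T)\}$ and $\|T\|_e\leq\max\{w(T),w^*(T)\}$. The first is elementary. The second goes through the duality $(B(X,Y)/K(X,Y))^*\cong K(X,Y)^\perp$, uses Lemma \ref{extrememideal} to locate the relevant extreme points, and represents them as weak$^*$ limits of elementary tensors.

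\emph{Lower bound.} Fix $K\in K(X,Y)$. A compact operator is weak-to-norm continuous on bounded sets, so for every bounded net $x_\alpha\xrightarrow{w}0$ we have $\|Kx_\alpha\|\to 0$. Hence
$$\limsup_\alpha\|Tx_\alpha\|\leq \limsup_\alpha\|(T-K)x_\alpha\|\leq\|T-K\|.$$
Likewise $K^*$ is compact and weak$^*$-to-norm continuous on bounded sets. So for bounded $y^*_\alpha\xrightarrow{w^*}0$ we get $\limsup_\alpha\|T^*y^*_\alpha\|\leq\|T-K\|$. Taking the infimum over $K$ gives $\max\{w(T),w^*(T)\}\leq\|T\|_e$.

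\emph{Upper bound.} Since $\|T\|_e$ is the quotient norm of $T$ in $B(X,Y)/K(X,Y)$, whose dual is $K(X,Y)^\perp$, Hahn--Banach gives some $\psi\in B_{K(X,Y)^\perp}$ with $\psi(T)=\|T\|_e$. The face $\{\psi\in B_{K^\perp}:\psi(T)=\|T\|_e\}$ is weak$^*$ compact. By Krein--Milman it has an extreme point $\varphi$, and $\varphi$ is then extreme in $B_{K^\perp}$. By Lemma \ref{extrememideal}, $\varphi\in\operatorname{ext}B_{B(X,Y)^*}$.

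The set $\{x\otimes y^*: x\in S_X,\ y^*\in S_{Y^*}\}$, acting by $S\mapsto y^*(Sx)$, is balanced and norming for $B(X,Y)$. Hence $B_{B(X,Y)^*}$ is its weak$^*$-closed convex hull. By Milman's converse to Krein--Milman, $\varphi$ is a weak$^*$ limit of a net $x_\alpha\otimes y^*_\alpha$. Passing to a subnet, assume $x_\alpha\to x^{**}$ weak$^*$ in $X^{**}$ and $y^*_\alpha\to y^*$ weak$^*$ in $Y^*$. Because $\varphi$ annihilates each rank-one operator $x^*\otimes y$, we get
$$0=\lim_\alpha x^*(x_\alpha)\,y^*_\alpha(y)=x^{**}(x^*)\,y^*(y)\quad\text{for all } x^*\in X^*,\ y\in Y.$$
Therefore $x^{**}=0$ or $y^*=0$.

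\emph{Conclusion from the dichotomy.} If $x^{**}=0$, then $(x_\alpha)$ is weakly null in $X$ with $\|x_\alpha\|=1$, and
$$\|T\|_e=|\varphi(T)|=\lim_\alpha|y^*_\alpha(Tx_\alpha)|\leq\limsup_\alpha\|Tx_\alpha\|\leq w(T).$$
If $y^*=0$, then $(y^*_\alpha)$ is weak$^*$ null, and
$$\|T\|_e\leq\limsup_\alpha\|T^*y^*_\alpha\|\leq w^*(T).$$
Both cases give the reverse inequality. Moreover, the net found in whichever case occurs attains $\|T\|_e$, while the lower bound shows that no admissible net exceeds it. So the corresponding supremum is achieved, which proves the final clause.

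The main obstacle is the step $\varphi\in\overline{\{x\otimes y^*\}}^{w^*}$ together with the product-limit dichotomy. One must verify carefully that the norming set is balanced, so that Milman's theorem applies without extra scalars. One must also check that the subnet limit of $x^*(x_\alpha)\,y^*_\alpha(y)$ factors as claimed, which holds since both factors converge along the common subnet.
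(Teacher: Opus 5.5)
The paper does not prove this lemma. It imports it from Werner's Lemma 5 and uses it as a black box, so there is no in-text argument to compare yours against.

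Your proof is correct and complete, and it follows the usual route to this formula. The $M$-ideal hypothesis is used exactly once, through Lemma \ref{extrememideal}: it makes an extreme point of the norming face of $B_{K(X,Y)^\perp}$ extreme in $B_{B(X,Y)^*}$. That is what lets Milman's converse produce a net $x_\alpha\otimes y^*_\alpha$. Annihilation of the rank-one operators then forces $x_\alpha\to 0$ weakly or $y^*_\alpha\to 0$ weak$^*$, and the attainment clause follows as you say.

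One observation is worth adding. Every application of Lemma \ref{esse} in the paper uses only your elementary lower bound $\|T\|_e\geq w(T)$, applied to norm-one weakly null sequences. This covers the normalized vectors in the proof of Lemma \ref{wmp} and the case $x=0$ in the proof of Theorem \ref{den}. That inequality holds for arbitrary $X,Y$, with no $M$-ideal assumption. The $M$-ideal machinery in your second half is needed only for the reverse inequality and for the statement that one supremum is attained.
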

We will now recall the definition of $(M_p)$-space.
\begin{defn}\cite{MR1121227}
    Let $1 \leq p <\infty$. We say that a Banach space $X$ has property $\left(M_p\right)$ or is an $\left(M_p\right)$-space if ${K}\left(X \oplus_p X\right)$ is an $M$-ideal of ${B}\left(X \oplus_p X\right)$.
\end{defn}
We refer the reader \cite{MR1121227, MR1238713} for more details on $(M_p)$-spaces. 

Note that the space $\ell_p$ belongs to the class of $(M_p)$-space for  $1 \leq p < \infty$.
From \cite{MR1121227}, we know that if $X$ is an $\left(M_p\right)$-space, then $X$ is reflexive and ${K}(X)$ is an $M$-ideal in ${B}(X)$. 
Moreover, from \cite[chapter VI, Theorem 6.6]{MR1238713} we get, if $X$ is an $(M_p)$-space, then for any weakly null sequence $\{u_n\}$ in $X$, the following equality holds
\begin{equation}\label{1}
        \lim\|u+u_n\|^p = \|u\|^p+\lim\|u_n\|^p \quad (u\in X).
    \end{equation}
The following lemma is an extension of \cite[Theorem 1, 2]{MR2540517} to the broader class of Banach spaces known as $(M_p)$-spaces, under the additional assumption that $\|T\|_e<\|T\|$.
\begin{lem}\label{wmp}
    Let $X$ and $Y$ be separable Banach spaces which belongs to the classes $(M_p)$ and $(M_q)$, respectively, $1<p,q<\infty$. Suppose $T\in {B}(X, Y)$ such that $\|T\|_e<\|T\|$ and the sequence $\{x_n\}$ in $B_X$ is a maximizing sequence for $T$. If a subsequence $\{x_{n_i}\}$ of $\{x_n\}$ converges weakly to $x\neq 0$, then $\|Tx\|=\|T\|$ and $\|x\|=1$.
\end{lem}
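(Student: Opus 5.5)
Normalize so that $\|T\|=1$; then $\|T\|_e<1$. Pass to the subsequence, still written $\{x_n\}$, with $x_n\xrightarrow{w}x\neq 0$ and $\|Tx_n\|\to 1$. Put $u_n=x_n-x$, so $u_n\xrightarrow{w}0$. Then $Tu_n=Tx_n-Tx\xrightarrow{w}0$ in $Y$. The plan is to use the $(M_p)$ and $(M_q)$ splitting formula (\ref{1}) in both $X$ and $Y$, and to control the weakly null part by the essential norm through Lemma \ref{esse}.

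\emph{Step 1 (splitting).} By (\ref{1}) in $X$, after passing to a further subsequence so that all limits exist,
$$1\ge \lim\|x_n\|^p=\|x\|^p+\lim\|u_n\|^p .$$
By (\ref{1}) in $Y$, applied to the weakly null sequence $\{Tu_n\}$,
$$1=\lim\|Tx_n\|^q=\|Tx\|^q+\lim\|Tu_n\|^q .$$

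\emph{Step 2 (essential norm bound).} Set $a=\|x\|\in(0,1]$ and $b=\lim\|u_n\|$, so $a^p+b^p\le 1$. If $b>0$, then $u_n/\|u_n\|$ is eventually a weakly null sequence of norm one vectors. Hence $\lim\|Tu_n\|\le w(T)\,b\le\|T\|_e\,b$ by Lemma \ref{esse}. (If $b=0$ the bound holds trivially.) Writing $c=\|T\|_e<1$, Step 1 gives
$$1\le \|Tx\|^q+c^qb^q\le a^q+c^qb^q .$$

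\emph{Step 3 (the obstacle: mixing exponents).} We must show that $a^q+c^qb^q\ge 1$ with $a^p+b^p\le1$, $c<1$ and $a>0$ forces $b=0$ and $a=1$. The case $q\ge p$ is easy. There $a^q+b^q\le (a^p+b^p)^{q/p}\le 1$, because the $\ell_q$ norm is dominated by the $\ell_p$ norm on $\mathbb{R}^2$. Equality throughout requires $c^qb^q=b^q$, hence $b=0$. Then $\|Tx\|=1$ and $a=1$ follow from $1\le\|Tx\|^q\le a^q\le 1$.

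The case $q<p$ is where I expect the difficulty. The inequality $a^q+b^q\le1$ can fail, for instance with $a=b=2^{-1/p}$. So Step 2 as written does not suffice, and I would refine it. The idea is to use that $T$ restricted to the span of $x$ and the $u_n$ behaves like an operator on $\ell_p^2$ with values in $\ell_q^2$. Consider the test vectors $(\alpha x+\beta u_n)/\|\alpha x+\beta u_n\|$, whose norm tends to $(|\alpha|^pa^p+|\beta|^pb^p)^{1/p}$ by (\ref{1}). Their images have norms tending to $(|\alpha|^q\|Tx\|^q+|\beta|^q\lim\|Tu_n\|^q)^{1/q}$, and this must be at most $\|T\|=1$ times the norm of the test vector. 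Optimizing over $\alpha,\beta$ and combining with $\|Tx_n\|\to1$ should show that the weakly null part cannot contribute. This is because the ratio $\lim\|Tu_n\|/b\le c<1$ makes any mass placed in $u_n$ strictly suboptimal. Concretely, I would take $\beta=0$ and $\alpha=1/a$ to get $\|Tx\|\le a$. I would then take $\alpha=0$ to get $\lim\|Tu_n\|\le cb$. Finally I would use the hypothesis that $x\ne0$ together with a scaling argument, replacing $x_n$ by $x+tu_n$ for suitable $t$, to derive a contradiction unless $b=0$. This would give $\|x\|=1$ and $\|Tx\|=\|T\|$.

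I expect this last inequality analysis for $q<p$ to be the main obstacle. If the scaling argument fails, I would fall back on exploiting that in $\ell_p\to\ell_q$ with $p>q$ every operator is essentially compact on weakly null sequences (Pitt-type behaviour). For a general pair of $(M_p)$- and $(M_q)$-spaces I would seek the analogous asymptotic estimate.
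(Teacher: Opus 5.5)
For $p\le q$ your argument is correct and is essentially the paper's, in compressed form. The paper first gets $\|Tx\|=\|x\|$ from the two splittings and then applies the essential-norm bound to $(x_n-x)/\|x_n-x\|$. Your chain $1\le a^q+c^qb^q\le a^q+b^q\le(a^p+b^p)^{q/p}\le1$ does both steps at once.

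The genuine gap is the case $q<p$. You leave it unproved, and the scaling argument you sketch for it cannot succeed. Write $\tau=\lim\|Tu_n\|$. Every test vector $\alpha x+\beta u_n$ (including $x+tu_n$) lies in a two-dimensional span. All such vectors can give you is:
\begin{itemize}
\item $a^p+b^p\le1$, $\|Tx\|^q+\tau^q=1$, $\|Tx\|\le a$ and $\tau\le cb$;
\item $|\alpha|^q\|Tx\|^q+|\beta|^q\tau^q\le(|\alpha|^pa^p+|\beta|^pb^p)^{q/p}$ for all scalars $\alpha,\beta$.
\end{itemize}
When $q<p$, all of these hold for $a=b=2^{-1/p}$, $\|Tx\|=\tau=2^{-1/q}$ and $c=2^{1/p-1/q}<1$. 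The last family reduces to the power-mean inequality $\bigl(\tfrac{|\alpha|^q+|\beta|^q}{2}\bigr)^{1/q}\le\bigl(\tfrac{|\alpha|^p+|\beta|^p}{2}\bigr)^{1/p}$. So no contradiction with $b>0$ can be extracted from $\operatorname{span}\{x,u_n\}$. You need infinitely many weakly null directions. That is exactly the Pitt-type phenomenon you mention only as a fallback and do not establish for general $(M_p)$/$(M_q)$-spaces.

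The paper handles this case by citing $K(X,Y)=B(X,Y)$ for $q<p$ \cite[Chapter VI, Corollary 5.14]{MR1238713}. Hence $Tx_n\to Tx$ in norm, so $\|Tx\|=1$, and then $1=\|Tx\|\le\|x\|\le1$. In your framework this says $c=0$, so your Step 2 already gives $\tau=0$.

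Alternatively, you can prove this with the tool you already use, by iterating (\ref{1}) in $X$ and in $Y$:
\begin{itemize}
\item Choose $n_1<n_2<\cdots$ inductively so that $\|\sum_{k\le N}u_{n_k}\|^p\le Nb^p+1$ and $\|\sum_{k\le N}Tu_{n_k}\|^q\ge N\tau^q-1$.
\item Since $\|T\|=1$, this gives $N\tau^q-1\le(Nb^p+1)^{q/p}$.
\item Because $q/p<1$, dividing by $N$ and letting $N\to\infty$ forces $\tau=0$.
\end{itemize}
Step 1 then gives $\|Tx\|=1$, and hence $\|x\|=1$.
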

\begin{proof}
For $q< p$, we have ${K}(X, Y) = {B}(X, Y)$ \cite[chapter VI, Corollary 5.14]{MR1238713}. Therefore, the results hold since compact operators are weak-to-norm sequentially continuous. 

Therefore, we can assume that $1<p\leq q<\infty$. 
Let $T\in {B}(X, Y)$, we can take $\|T\|=1$. Let the sequence $\{x_n\}$ be a maximizing sequence for $T$ and $\{x_n\}$ converges weakly to $x\neq 0$ (we are denoting subsequence of $\{x_n\}$ by $\{x_n\}$ itself).  Since $X$ is $(M_p)$ and $Y$ is $(M_q)$, it follows from Equation (\ref{1}) that
    \begin{equation}\label{xn}
        1-\|x\|^p =\lim\|x_n-x\|^p
    \end{equation}
    and
     \begin{equation}\label{txn}
        1=\lim\|Tx_n\|^q =\|Tx\|^q+\lim\|Tx_n-Tx\|^q.
    \end{equation}  
    Note that $\|Tx\|\leq 1$ and $\lim\|Tx_n-Tx\|\leq 1$, consequently, since $q\geq p\geq 1$, 
 \begin{align*}
 1=\lim\left\|Tx_n\right\|^{q} &=\|Tx\|^q+\lim\|Tx_n-Tx\|^q\\
 &\leq \|Tx\|^p+\lim\|Tx_n-Tx\|^p\\
 &\leq \|Tx\|^p+\lim\|x_n-x\|^p\\
 &= \|Tx\|^p+ 1-\|x\|^p.
\end{align*}
Therefore, 
\begin{equation}\label{tisx}
    \|Tx\|= \|x\|.
\end{equation} Since $\|x\|\leq 1$, using Equations (\ref{xn}) and (\ref{txn}) together with the fact that $q\geq p> 1$, we get
\begin{equation}\label{1of2}
    1-\|x\|^q\geq 1-\|x\|^p.
\end{equation}
and
\begin{equation}\label{2of2}
    \lim \|x_n-x\|^q = \lim (\|x_n-x\|^p)^{q/p} = (1-\|x\|^p)^{q/p}\leq 1-\|x\|^p. 
\end{equation}
If possible, assume that the sequence $\{\|x_n-x\|\}$ does not converge to $0$. Then there exists $\varepsilon>0$ and a subsequence $\{x_{n_k}\}$ of $\{x_n\}$ such that $\|x_{n_k}-x\| >\varepsilon$ for all $k$. Then, by Equations (\ref{1of2}) and (\ref{2of2}),

\begin{align*}
   \lim \left\|T\left(\frac{x_{n_k}-x}{\|x_{n_k}-x\|}\right)\right\|^q & =\frac{1-\|Tx\|^q}{\lim\|x_{n_k}-x\|^q}\\
    &=\frac{1-\|x\|^q}{\lim\|x_{n_k}-x\|^q}\\
    &\geq\frac{1-\|x\|^p}{1-\|x\|^p}\\
    &=1.
    \end{align*}
Since the sequence $\left\{\frac{x_{n_k}-x}{\|x_{n_k}-x\|}\right\}$ converges weakly to $0$, by Lemma \ref{esse}, 
\begin{equation*}
    \|T\|_e\geq  \lim \left\|T\left(\frac{x_{n_k}-x}{\|x_{n_k}-x\|}\right)\right\| \geq 1 =\|T\|. 
\end{equation*}
Which is not possible by our assumption. Therefore,  the sequence $(\|x_n-x\|)$  converges to $0$. Hence, by Equations (\ref{tisx}) and (\ref{xn}), $\|Tx\| =\|x\|=1 =\|T\|$.
\end{proof}
We will next give a proof of Theorem \ref{den}.

 \begin{proof}[Proof of Theorem \ref{den}] 

Let $T \in {B}(\ell_p, \ell_q) $ such that $\|T\|_e<\|T\|$. To see that the operator $T\in  \operatorname{SSD}\{{B}(\ell_p, \ell_q)\}$, it is enough to verify, by Theorem \ref{main1}, $\liminf d(x_n, M_T) = 0$ for every maximizing sequence $\{x_n\}$ of $T$. 

Let the sequence $\{x_n\}$ be a maximizing sequence for $T$, then there exists a subsequence of $\{x_n\}$ (we will denote the subsequence by $\{x_n\}$ itself) converges weakly to some $x\in \ell_p$ by the reflexivity of $\ell_p$. 

If $x=0$, then by Lemma \ref{esse}, $\|T\|=\|T\|_e$, which is not possible.

Therefore, $x\neq 0$, by Lemma \ref{wmp}, $\|Tx\| =\|T\|$, and $\|x\|=1$. Thus $x\in M_T$. 

Since the sequence $\{x_n\}$ is a maximizing sequence for $T$, from the proof of Lemma \ref{wmp}, the sequence $\{\|x_n-x\|\}$  converges to $0$. Thus $d(x_n, M_T)\rightarrow 0$. Hence, by Theorem \ref{main1}, the operator $T\in  \operatorname{SSD}\{{B}(\ell_p, \ell_q)\}$.

The denseness of the set $\operatorname{SSD}\{{B}(\ell_p, \ell_q)\}$ is now follows from Lemma \ref{nowhere}.
\end{proof}
We know that, for a Hilbert space $H$, the norm of ${B}(H)$ is strongly subdifferentiable at every compact operator on $H$ \cite{MR1397934}. We will next obtain a corollary which extends this result to $\ell_p$ spaces.
\begin{cor}
    Suppose $1<p,q<\infty$ and $K\in {B}(\ell_p, \ell_q)$ be a compact operator. Then the norm of ${B}(\ell_p, \ell_q)$ is strongly subdifferentiable at $K$.
\end{cor}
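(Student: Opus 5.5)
The plan is to reduce the statement to Theorem \ref{den}. That requires separating off the trivial case $K=0$ and then checking $\|K\|_e<\|K\|$. For a compact operator $K$ we have $\|K\|_e=d(K,{K}(\ell_p,\ell_q))=0$. So whenever $K\neq 0$ we get $\|K\|_e=0<\|K\|$, and $K$ lies in the set on the left-hand side of the inclusion (\ref{inclu}). Theorem \ref{den} then gives $K\in \operatorname{SSD}\{{B}(\ell_p,\ell_q)\}$ directly. Strictly speaking, Theorem \ref{den} reduces through Theorem \ref{main1} to normalized operators. Since $u$ is an $SSD$ point if and only if $u/\|u\|$ is, we may replace $K$ by $K/\|K\|$, which is still compact and has norm one.

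The only remaining issue is $K=0$, where $\|K\|_e=\|K\|=0$ and Theorem \ref{den} does not apply. Here I would argue straight from Definition \ref{ssddef}. At $u=0$ the quotient is $\frac{\|0+tS\|-\|0\|}{t}=\|S\|$ for every $t>0$, so the limit exists and is attained exactly, uniformly in $S$. The set $J(0)=\{\phi\in S_{{B}(\ell_p,\ell_q)^*}:\phi(0)=0\}$ is the whole unit sphere of the dual, so by Hahn--Banach $\operatorname{Max}\{\operatorname{Re}\phi(S):\phi\in J(0)\}=\|S\|$. The two sides of (\ref{ssdeq}) therefore agree identically, and $0$ is trivially an $SSD$ point. (If the paper's convention only considers nonzero points, this case can simply be excluded.)

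I expect no real obstacle; all the substantive work sits in Theorem \ref{den}, which rests on Lemma \ref{wmp} and Theorem \ref{main1}. As a sanity check independent of Theorem \ref{den}, one can also verify condition $(ii)$ of Theorem \ref{main1} directly for compact $K$ with $\|K\|=1$. Take a maximizing sequence $\{x_n\}$. By reflexivity a subsequence converges weakly to some $x$, and compactness gives $Kx_{n_i}\to Kx$ in norm, so $\|Kx\|=1$. Since $\|x\|\le 1$, this forces $\|x\|=1$, so $x\in M_K$. Uniform convexity of $\ell_p$, via the Kadec--Klee property, then upgrades weak convergence with $\|x_{n_i}\|\to\|x\|$ to norm convergence. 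Hence $d(x_{n_i},M_K)\to 0$, and Theorem \ref{main1} yields the claim.
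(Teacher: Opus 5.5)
Your proposal is correct and follows the paper's proof: for compact $K\neq 0$ one has $\|K\|_e=0<\|K\|$, so Theorem~\ref{den} applies directly. Your separate treatment of $K=0$ is a correct addition, since the paper's one-line argument silently skips it ($0$ is not in the set in (\ref{inclu})). Your independent check of condition $(ii)$ of Theorem~\ref{main1}, via complete continuity of $K$ and the Kadec--Klee property of $\ell_p$, is also correct.
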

\begin{proof}
    The proof follows directly from the inclusion,  $\left\{T \in {B}(\ell_p, \ell_q): \|T\|_e<\|T\|\right\}\subseteq \operatorname{SSD}\{{B}(\ell_p, \ell_q)\}.$
\end{proof}
Observe from Theorem \ref{main1} that if an operator $T\in \operatorname{SSD}\{{B}(\ell_p, \ell_q)\}$, then $T$ attains its norm, the converse is not true as the following example suggest. 
\begin{eg}\label{proper}
    Let $1<p, q<\infty$ and $\{e_n\}$ denotes the canonical basis of $\ell_p$. Let $T\in {B}(\ell_p, \ell_q)$ be an operator defined as follows, for each $x=(\zeta_1, \zeta_2, \ldots)\in\ell_p$
    $$T\left(\sum_{i=1}^\infty \zeta_ne_n\right)= \zeta_1e_1+\sum_{i=2}^\infty \left(1-\frac{1}{n}\right)\zeta_ne_n.$$
Clearly,  $M_T=\{\alpha e_1 : \alpha\in \mathbb{F}, |\alpha|=1\}$. Let the sequence $\{x_n\}$ be a maximizing sequence for $T$. Then, for some subsequence $\{x_{n_i}\}$, $d(x_{n_i}, M_T)\rightarrow 0$ if and only if $x_{n_i}\rightarrow e_1$. But the sequence $\{e_n\}$ is a maximizing sequence for $T$ and does not contain any subsequence converging to $e_1$. Hence $(ii)$ of Theorem \ref{main1} fails to hold. Therefore, $T\notin \operatorname{SSD}\{{B}(\ell_p, \ell_q)\}$.
\end{eg}
\begin{rmk}
    It is well known that the set of all norm-attaining bounded linear operators on a reflexive Banach space $X$ is dense in $B(X)$ \cite{MR0160094}.  From Example \ref{proper} and Theorem \ref{main1} we get $\operatorname{SSD}\{{B}(\ell_p)\}$ is a proper subset of norm-attaining bounded linear operators on $\ell_p$. Moreover, from Theorem \ref{den}, $\operatorname{SSD}\{{B}(\ell_p)\}$ is dense in ${B}(\ell_p)$. 
\end{rmk}

As an application of Theorem \ref{den}, we will next obtain a characterization of Fr\'{e}chet differentiability. Consequently, we will show the equivalence between Fr\'{e}chet differentiability and Gateaux differentiability of the norm of ${B}(\ell_p, \ell_q)$ (see Theorem \ref{equiv}), just as observed in the case of operator norm on Hilbert spaces (see \cite[Theorem 3.1]{MR519008} and \cite[Theorem]{MR1149980}). We begin by recalling a few definitions and results.
\begin{defn}\label{fred}
    The norm of a  Banach space $X$ is Fr\'{e}chet differentiable at a point $u$ in the unit sphere $S_X$ if and only if there is a bounded linear functional $f \in X^*$ (unique) such that
\begin{equation}\label{fre}
    \lim_{t \rightarrow 0} \frac{\|u+t x\|-1}{t}=\operatorname{Re} f(x)
\end{equation}
uniformly on $ B_X$.
\end{defn} 
If we drop the uniformity assumption for $x$ in the above definition, we have the definition of Gateaux differentiability of the norm at $u$. That is, Gateaux differentiability demands only the existence of the limit in Equation (\ref{fre}) at each point $x\in B_X$.
When $x= u$, we see that the unique bounded linear functional $f$ in Definition \ref{fred} satisfies $\|f\|=f(u)=1$. Indeed, the functional $f$, which we call the gradient of the norm at $u$, is then uniquely determined by the condition
$$
\|f\|=f(u)=1.
$$
If the norm of a Banach space $X$ is Gateaux differentiable at a point $u\in S_X$, then it is usually said that $u$ is a smooth point of $S_X$. 

We will next recall a theorem from \cite{MR1151547}, which combined with Theorem \ref{den} of the present paper will give a characterization for Fr\'{e}chet differentiability in ${B}\left(\ell_p,\ell_q\right)$ in terms of the essential norm of an operator.
\begin{thm}\cite[Theorem 2.1]{MR1151547} and \cite[Theorem 1]{MR1155717}\label{gat}
Let $T \in {B}\left(\ell_p,\ell_q\right), 1<p,q<\infty$, and $\|T\|=1$. Then the following are equivalent:
\begin{enumerate}
\item[$(i)$] $T$ is smooth.
\item[$(ii)$] $\|T\|_e <1$ and if $\left\|T x_1\right\|=\left\|T x_2\right\|=1$ for $x_1, x_2 \in B_{\ell_p}$ then $x_1= \alpha x _2$, where $\alpha\in \mathbb{F}$ and $|\alpha| =1$.
\end{enumerate}
\end{thm}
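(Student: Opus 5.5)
The plan is to use the description of $J_{{B}(\ell_p,\ell_q)^*}(T)$ from the proof of Theorem \ref{main1}, together with the fact that $T$ is smooth if and only if $J_{{B}(\ell_p,\ell_q)^*}(T)$ is a singleton. Since ${K}(\ell_p,\ell_q)$ is an $M$-ideal, we have ${B}(\ell_p,\ell_q)^*={K}(\ell_p,\ell_q)^\perp\oplus_1 (\ell_p\hat{\otimes}_\pi\ell_q^*)$. By Theorem \ref{extreruess}, the extreme points of $B_{\ell_p\hat\otimes_\pi\ell_q^*}$ are exactly the elementary tensors $x\otimes y^*$ with $x\in S_{\ell_p}$ and $y^*\in S_{\ell_q^*}$. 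Such a tensor lies in $J(T)$ iff $x\in M_T$ and $y^*(Tx)=1$. Because $\ell_q$ is smooth, the last condition forces $y^*=J(Tx)$, the unique norming functional of $Tx$.

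\emph{Step 1: $(ii)\Rightarrow(i)$.} Write $\varphi\in J(T)$ as $\varphi=\psi+k$ with $\psi\in{K}^\perp$, $k\in \ell_p\hat\otimes_\pi\ell_q^*$ and $\|\psi\|+\|k\|=1$. Then
\[
1=\operatorname{Re}\varphi(T)\le \|\psi\|\,\|T\|_e+\|k\|.
\]
Since $\|T\|_e<1$, this forces $\psi=0$, so $J(T)\subset \ell_p\hat\otimes_\pi\ell_q^*$.

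Next, $\|T\|_e<1$ gives $M_T\neq\emptyset$: take a maximizing sequence and a weak limit $x$. The limit $x$ cannot be $0$ by Lemma \ref{esse}, and then Lemma \ref{wmp} gives $x\in M_T$. By $(ii)$, $M_T=\{\alpha x_0:|\alpha|=1\}$.

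Every extreme point of the face $J(T)$ is extreme in $B_{{B}^*}$, and by the above it is an elementary tensor in $J(T)$. So it has the form $\alpha x_0\otimes J(\alpha Tx_0)=x_0\otimes J(Tx_0)$. By Krein--Milman (weak$^*$), $J(T)$ is the single point $x_0\otimes J(Tx_0)$, so $T$ is smooth.

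\emph{Step 2: $(i)\Rightarrow(ii)$, the attained part.} If $x_1,x_2\in M_T$ are not unimodular multiples of each other, then $x_1\otimes J(Tx_1)$ and $x_2\otimes J(Tx_2)$ are two elements of $J(T)$. They are distinct, since two nonzero elementary tensors coincide only if their first factors are proportional, and equal norms plus $J(Tx_i)$ forcing the same phase rules out a unimodular multiple. So smoothness forces the second condition in $(ii)$.

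\emph{Step 3: $(i)\Rightarrow(ii)$, the essential-norm part.} This is the main obstacle. Suppose $\|T\|_e=1$. Hahn--Banach in the quotient ${B}/{K}$ yields $\psi\in {K}^\perp\cap J(T)$.
\begin{itemize}
\item If $M_T\neq\emptyset$, then $x\otimes J(Tx)$ with $x\in M_T$ is a second element of $J(T)$, distinct from $\psi$ because it is nonzero on ${K}$.
\item If $M_T=\emptyset$, I will produce two distinct elements of ${K}^\perp\cap J(T)$. Lemma \ref{esse} gives a normalized weakly null sequence $\{x_n\}$ with $\|Tx_n\|\to1$; the case where the supremum is attained by $T^*$ is symmetric.
\end{itemize}
In the second case, a gliding hump argument lets me pass to a subsequence and perturb so that the $x_n$ are disjointly supported blocks in $\ell_p$. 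Let $P$ be the diagonal projection onto the supports of the odd-indexed blocks. Let $\varphi_{\mathrm{odd}}$ and $\varphi_{\mathrm{even}}$ be weak$^*$ cluster points of $x_n\otimes J(Tx_n)$ along odd and even indices, respectively. Both lie in $J(T)$, and both annihilate ${K}$ since $x_n\to0$ weakly. However, $\varphi_{\mathrm{odd}}(TP)=1$ while $\varphi_{\mathrm{even}}(TP)=0$, so they are distinct, contradicting smoothness. Hence $\|T\|_e<1$, completing $(ii)$.
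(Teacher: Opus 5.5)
The paper does not prove this statement; it is quoted from Deeb--Khalil \cite{MR1151547} and Werner \cite{MR1155717}, so there is no internal argument to compare against. Your proposal is a correct, self-contained proof. It is built from the same tools the paper uses in the proof of Theorem \ref{main1}:
\begin{itemize}
\item the decomposition ${B}(\ell_p,\ell_q)^*={K}^\perp\oplus_1{K}^*$ coming from the $M$-ideal property;
\item the Ruess--Stegall description (Theorem \ref{extreruess}) of the extreme points of $B_{{K}^*}$;
\item Krein--Milman on the face $J(T)$;
\item Lemma \ref{esse}.
\end{itemize}
Sufficiency is the estimate $1\le\|\psi\|\,\|T\|_e+\|k\|$, which forces $J(T)$ into ${K}^*$. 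There $J(T)$ is generated by functionals $x\otimes J(Tx)$ with $x\in M_T$. The key case of necessity of $\|T\|_e<1$ is a gliding hump that produces two distinct norming functionals of $T$ annihilating ${K}$. In effect, it shows $T+{K}$ is never smooth in the quotient. The benefit is that Proposition \ref{equiv} would then rest only on facts the paper already invokes, rather than on two external papers. One implicit point you should state: the ${K}^*$-component of a unit tensor $x\otimes y^*$ acts on all of ${B}$ as $S\mapsto y^*(Sx)$. This holds because Hahn--Banach extensions from an $M$-ideal are unique.

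A few things can be tightened.
\begin{itemize}
\item In Step 1 you do not need Lemma \ref{wmp}. Once $J(T)\subset{K}^*$, every extreme point of the nonempty weak$^*$-compact face $J(T)$ is some $x\otimes J(Tx)$ with $x\in M_T$; this is how Theorem \ref{main1} gets $M_T\neq\emptyset$. Condition (ii) then makes all these extreme points equal.
\item In Step 2 the remark about phases is superfluous. Equality $x_1\otimes y_1^*=x_2\otimes y_2^*$ of unit elementary tensors already forces $x_1=cx_2$ with $|c|=1$.
\item In Step 3 the case split on $M_T$ is unnecessary, since the gliding-hump construction works whenever $\|T\|_e=1$.
\item Lemma \ref{esse} is stated for nets. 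Note that the weak topologies on $B_{\ell_p}$ and $B_{\ell_{q'}}$ are metrizable, so you can diagonalize to sequences. You then only need $\max\{w(T),w^*(T)\}=1$, not attainment of the suprema.
\item ``Symmetric'' for the $T^*$ case deserves one line. Take a normalized weakly null block sequence $(y_n)$ in $\ell_{q'}$ with $\|T^*y_n\|\to1$, and choose $z_n\in S_{\ell_p}$ with $(T^*y_n)(z_n)=\|T^*y_n\|$. Use the functionals $S\mapsto y_n(Sz_n)$, which tend to $0$ on compacts because $\|S^*y_n\|\to0$. Separate the odd and even cluster points by evaluating at $QT$, where $Q$ is the diagonal projection of $\ell_q$ onto the supports of the odd blocks.
\end{itemize}
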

In the following theorem, we will see that the condition $(ii)$ of Theorem  \ref{gat} characterizes Fr\'{e}chet differentiability rather than smoothness. Moreover, the following theorem is an extension of \cite[Theorem 3.1]{MR519008} and \cite[Theorem]{MR1149980}  to the class of $\ell_p$ spaces. At this stage, the proof of the following corollary looks simple, but Theorem \ref{den} and consequently Theorem \ref{main1} are very crucial in the proof. 
\begin{prop}\label{equiv}
     Let $T \in {B}\left(\ell_p,\ell_q\right), 1<p<\infty$, and $\|T\|=1$. Then the following are equivalent:
\begin{enumerate}
\item[$(i)$]  The norm of ${B}\left(\ell_p\right)$ is Fr\'{e}chet differentiable at $T$.
\item[$(ii)$] $\|T\|_e <1$ and if $\left\|T x_1\right\|=\left\|T x_2\right\|=1$ for $x_1, x_2 \in B_{\ell_p}$ then $x_1= \alpha x _2$, where $\alpha\in \mathbb{F}$ and $|\alpha| =1$.
\end{enumerate}
Consequently, the norm of ${B}(\ell_p, \ell_q)$ is Fr\'{e}chet differentiable at $T$ if and only if norm of ${B}(\ell_p, \ell_q)$ is Gateaux differentiable at $T$.
\end{prop}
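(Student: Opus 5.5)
The plan is to combine the known characterization of smoothness (Theorem \ref{gat}) with the inclusion of Theorem \ref{den}. We use the general fact recalled in the introduction from \cite{MR1216708}: the norm of a Banach space is Fr\'{e}chet differentiable at a point if and only if it is both Gateaux differentiable and strongly subdifferentiable there. The statement is read for ${B}(\ell_p,\ell_q)$ with $1<p,q<\infty$; the case ${B}(\ell_p)$ is $q=p$.

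For $(i)\Rightarrow(ii)$: Fr\'{e}chet differentiability trivially implies Gateaux differentiability, since the defining limit exists pointwise. Hence $T$ is a smooth point of ${B}(\ell_p,\ell_q)$, and Theorem \ref{gat} gives $(ii)$ directly.

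For $(ii)\Rightarrow(i)$: by Theorem \ref{gat}, condition $(ii)$ makes $T$ a smooth point, that is, a point of Gateaux differentiability. Moreover, $(ii)$ contains $\|T\|_e<1=\|T\|$, so Theorem \ref{den} yields $T\in\operatorname{SSD}\{{B}(\ell_p,\ell_q)\}$. The only point needing care is the combination step: Gateaux differentiability makes $J_{{B}(\ell_p,\ell_q)^*}(T)$ a singleton $\{f\}$. The right-hand side of (\ref{ssdeq}) then equals $\operatorname{Re}f(S)$, and strong subdifferentiability says $\frac{\|T+tS\|-1}{t}\to\operatorname{Re}f(S)$ uniformly on $B_{{B}(\ell_p,\ell_q)}$ as $t\to0^+$. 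Applying this to $-S$ handles $t\to0^-$, since $\frac{\|T+tS\|-1}{t}=-\frac{\|T+|t|(-S)\|-1}{|t|}$, which tends to $-\operatorname{Re}f(-S)=\operatorname{Re}f(S)$. This gives the two-sided uniform limit of Definition \ref{fred}, so the norm is Fr\'{e}chet differentiable at $T$.

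For the final consequence, note that by Theorem \ref{gat} Gateaux differentiability is equivalent to $(ii)$, which we have just shown is equivalent to Fr\'{e}chet differentiability. No step here is a genuine obstacle; the substantive input is the inclusion in Theorem \ref{den}, which rests on Theorem \ref{main1} and Lemma \ref{wmp}.
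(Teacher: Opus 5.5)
Your proposal is correct and is essentially the paper's proof. For $(i)\Rightarrow(ii)$ you pass from Fr\'{e}chet to Gateaux differentiability and apply Theorem \ref{gat}; for $(ii)\Rightarrow(i)$ you get smoothness from Theorem \ref{gat} and strong subdifferentiability from the inclusion in Theorem \ref{den} (since $\|T\|_e<1=\|T\|$), then combine them.

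The only difference is that you check the combination step directly, using the singleton $J_{{B}(\ell_p,\ell_q)^*}(T)$ and the substitution $S\mapsto -S$ for $t\to 0^-$. The paper instead cites the general fact that Gateaux differentiability plus strong subdifferentiability gives Fr\'{e}chet differentiability. You also rightly read the statement for ${B}(\ell_p,\ell_q)$ with $1<p,q<\infty$.
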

\begin{proof}
    Fr\'{e}chet differentiability implies  Gateaux differentiability (smoothness). Hence, by Theorem \ref{gat},  $(i)\Rightarrow (ii)$.

    Conversely, if $(ii)$ holds, then by Theorem \ref{gat}, $T$ is smooth.
    Also, by Theorem \ref{den}, $T$ is an $SSD$ point of ${B}(\ell_p, \ell_q)$. Since Gateaux differentiability together with strong subdifferentiability implies Fr\'{e}chet differentiability, the norm of ${B}(\ell_p, \ell_q)$ is Fr\'{e}chet differentiable at $T$. 
\end{proof}
{\bf Acknowledgement:} A part of this research was completed during the author's tenure as a Visiting Scientist at ISI Bangalore and ISI Kolkata. The author extends his heartfelt gratitude to ISI Bangalore and ISI Kolkata, especially to Prof. Jaydeb Sarkar of ISI Bangalore and Prof. Debashish Goswami of ISI Kolkata, for their generous hospitality and unwavering support during the period. A portion of this research has been supported through the JC Bose National Fellowship and grant of Prof. Debashish Goswami, sponsored by the Science and Engineering Research Board (SERB) under the Government of India. 
\def\romsup#1{{\edef\next{\the\font}$^{\next#1}$}}

\bibliographystyle{amsplain}
\end{document}